\definecolor{darkgreen}{rgb}{0, 0.5, 0}
\newtheorem{theorem}{Theorem}
\newtheorem{lemma}[theorem]{Lemma}
\newtheorem{corollary}[theorem]{Corollary}
\newtheorem{definition}[theorem]{Definition}
\newtheorem{Ex}[theorem]{Example}
\newtheorem*{theorem*}{Theorem}
\newtheorem{remark}[theorem]{Remark}
\newenvironment{customthm}[1]
  {\innercustomthm}
  {\endinnercustomthm}
\newcommand{\rn}[1]{{\color{red} #1}}
\newcommand{\mf}{\mathfrak}
\newcommand{\bn}[1]{{\color{blue} #1}}
\newcommand{\ind}{{\rm ind \hspace{.1cm}}}
\newcommand{\C}{\mathbb{C}}
\newcommand{\g}{\mathfrak{g}}
\newcommand{\ul}[1]{\underline{#1}}
\tikzstyle{vertex}=[circle, draw, inner sep=0pt, minimum size=13pt]
\newcommand{\vertex}{\node[vertex]}
\tikzstyle{svertex}=[circle, draw, inner sep=0pt, minimum size=8pt]
\newcommand{\svertex}{\node[svertex]}
\begin{document}

\title{Contact seaweeds II:  type C}

\author[*]{Vincent E. Coll, Jr.}
\author[**]{Nicholas Russoniello}

\affil[*]{\small{Department of Mathematics, Lehigh University, Bethlehem, PA 18015: vec208@lehigh.edu}}
\affil[**]{\small{Department of Mathematics, College of William \& Mary, Williamsburg, VA 23185: nrussoniello@wm.edu}}

\maketitle
\begin{abstract}
\noindent
This paper is a continuation of earlier work on the construction of contact forms on seaweed algebras.  In the prequel to this paper, we show that every index-one seaweed subalgebra of $A_{n-1}=\mathfrak{sl}(n)$ is contact by identifying contact forms that arise from Dougherty's framework.  We extend this result to include index-one seaweed subalgebras of $C_{n}=\mathfrak{sp}(2n)$. Our methods are graph-theoretic and combinatorial.

\end{abstract}

\noindent
\textit{Mathematics Subject Classification 2020}: 17Bxx, 53D10

\noindent 
\textit{Key Words and Phrases}: contact Lie algebra, contact structure, seaweeds, meanders, regular one-forms 

\section{Introduction}

The index\footnote{The $index$ of a Lie algebra $(\g, [-,-])$ is defined by 
$\ind \g=\min_{\varphi\in \mathfrak{g}^*} \dim  (\ker (B_\varphi))$, where $\varphi$ is an element of the linear dual $\mathfrak{g}^*$  and $B_\varphi$ is the associated skew-symmetric \textit{Kirillov form} defined by 
$B_\varphi(x,y)=\varphi([x,y]), \textit{ for all }~ x,y\in\g.$} of a Lie algebra is an important algebraic invariant and recent investigations have concentrated on methods by which this integral invariant may be computed for a class of algebras called \textit{seaweed} (or \textit{biparabolic}) algebras (see \textbf{\cite{ACam}, \cite{DK}, \cite{Joseph},} and \textbf{\cite{Pan}}).  Seaweed algebras of classical type are certain Lie subalgbras of $\mathfrak{gl}(n)$\footnote{There is a basis-free definition of seaweed Lie algebras, but we do not require it for our current investigation.}; 
for example, a type-C ($C_{n}=\mathfrak{sp}(2n,\mathbb{C})$) seaweed is defined by two partial compositions of $n$ and a symmetry condition across the anti-diagonal. Ongoing, we will assume that all Lie algebras are over $\mathbb{C}$.

From the differential point of view, the problem of identifying which Lie groups admit a left-invariant contact structure, i.e., have associated Lie algebras that are contact, remains an open question (see \textbf{\cite{defcontact}, \cite{contactposet}, \cite{Diatta}, \cite{GR}} and \textbf{\cite{contacttoral}}).
Recall that a $(2k+1)$-dimensional Lie algebra $\g$ is called \textit{contact} if there exists a one-form $\varphi\in\g^*$ such that $\varphi  \wedge (d\varphi)^k\ne 0$.  In this case, $\mathfrak{g}$ is said to be $\textit{contact}$, $\varphi$ is called a \textit{contact form}, and $\varphi  \wedge (d\varphi)^k$ is a \textit{volume form} on the underlying Lie group. Contact Lie algebras have index one, but not characteristically so.


As in the prequel (\textbf{\cite{contactA}}, 2022), we are concerned with constructing contact one-forms, when possible, on index-one seaweeds.  In \textbf{\cite{contactA}}, the authors establish 
that index-one, type-A seaweeds are necessarily contact by leveraging the ``Dougherty construction" (\textbf{\cite{Adiss}}, 2019; cf. \textbf{\cite{aria}}, 2020), which yields index-realizing one-forms with readily describable kernels -- compare with those of the Kostant one-forms. Here, we extend the methods used to procure the type-A result to include the more complicated type-C case, thus obtaining the following more general theorem, which constitutes the main result of this paper.

\begin{customthm}{\ref{thm:main}} If $\mathfrak{g}$ is a type-A or type-C seaweed, then $\mathfrak{g}$ is contact if and only if $\ind\g=1.$
\end{customthm}

\begin{remark}
It was recently shown in \textup{(\textbf{\cite{contactclass}}, 2023)} that an index-one seaweed is contact if and only if it is quasi-reductive. Combining this with a result of Panyushev \textup{(\textbf{\cite{PanRais}}, 2005)} which ensures each type-A and type-C seaweed is quasi-reductive yields Theorem~\ref{thm:main} above. However, the methods of \textup{\textbf{\cite{contactclass}}} are algebraic and existential, rather than combinatorial and constructive. Thus, in tandem with the type-A result from \textup{\textbf{\cite{contactA}}}, we provide an alternative proof of Theorem~\ref{thm:main}, along with an explicit, combinatorially described contact form for each type-A and type-C seaweed.
\end{remark}

\bigskip
\noindent

The structure of the paper is as follows. In Section~\ref{sec:seaweeds}, we define seaweed algebras of type C and detail the construction of an associated planar graph: the seaweed's \textit{meander}.  Using a combinatorial formula based on the  number and type of connected components in the meander, one readily obtains the index of the corresponding seaweed. A collection of deterministic, graph-theoretic moves yield a ``winding-down" procedure for a meander, revealing the meander's \textit{homotopy type}. Index-one seaweeds are then classified according to their meander's homotopy type.  In Section~\ref{sec:regular}, we introduce another planar graph built from the seaweed's meander -- the \textit{component meander} of the seaweed  -- which is used to identify the \textit{core} and \textit{peak} of the seaweed. These  distinguished sets are certain sets of matrix locations, and they provide the grist for the Dougherty construction. Remarkably, when restricted to index-one, type-C seaweeds, the Dougherty construction yields regular forms that are also contact. Section~\ref{sec:main} consists of the proof of the main result, which is then used -- along with a convenient index formula for certain type-C seaweeds -- in Section~\ref{sec:examples} to quickly construct infinite families of contact, type-C seaweeds. 

\section{Seaweeds and meanders}\label{sec:seaweeds}
\noindent
A seaweed subalgebra $\g$ of 
$\mathfrak{gl}(n)$ is defined by two compositions of $n$. The passage to seaweeds  of ``classical type" is realized by requiring that elements of $\g$ satisfy additional algebraic conditions. For example, the type-A case ($A_{n-1}=\mathfrak{sl}(n)$) is defined by a vanishing trace condition.  A type-C seaweed in its ``standard (matrix) form'' is a Lie subalgebra of $\mf{sp}(2n)$ and is
parametrized by two \textit{partial compositions} of $n$ as follows.  Let $$\ul{a}=(a_1,a_2,\dots ,a_m)\text{ and } \ul{b}=(b_1,b_2,\dots ,b_t) 
\text{ with } \displaystyle \sum_{i=1}^{m}a_i \leq n \text{ and }
\displaystyle \sum_{i=1}^{t}b_i \leq n,$$
and let $\mathscr{L}$ and 
$\mathscr{U}$ denote, respectively, the subspaces of $M_{2n}(\mathbb{C})$ consisting of all lower and upper triangular matrices, respectively. If $X_{ \ul{a} }$ is the vector space of block-diagonal matrices whose blocks  have sizes 
\begin{eqnarray}
a_1\times a_1,\dots,a_m\times a_m, 2\left(n-\sum a_i\right) \times 2\left(n-\sum a_i\right), a_m \times a_m, \dots, a_1 \times a_1
\end{eqnarray}
and similarly for $X_{\ul{b}}$, then the underlying vector space for a type-C seaweed is the subspace of $M_{2n}(\mathbb{C})$ given by 
$$(X_{\ul{a}} \cap \mathscr{L}) ~\cup~ (X_{\ul{b}} \cap \mathscr{U}).$$
We denote a type-C seaweed with such a standard form by

\begin{eqnarray}\label{num}
\mathfrak{p}_{2n}^C\frac{a_1|\dots|a_m|2n-2\sum_{i=1}^ma_i|a_m|\dots|a_1}{b_1|\dots|b_t|2n-2\sum_{j=1}^tb_j|b_t|\dots|b_1}
\end{eqnarray}
or, more succinctly, by $\mathfrak{p}_{2n}^C\frac{a_1|\dots|a_m}{b_1|\dots|b_t}$, because of the evidently palindromic pattern of the numerator and denominator of the fraction in (\ref{num}).  See Example \ref{matrix}.

\begin{Ex}\label{matrix}
Consider the seaweed $\mathfrak{p}_{16}^C\frac{2|3}{1|6}$, whose standard form is illustrated in Figure \ref{matrix}.
The asterisks represent ``admissible'' locations of the seaweed where possible nonzero entries from $\C$ may appear; blank locations are forced zeroes.

\begin{figure}[H]\label{matrix1}
$$\begin{tikzpicture}[scale=0.4]
    \def\Node{\node [circle, fill, inner sep=3pt]}
	\draw (0,0) -- (0,16);
	\draw (0,16) -- (16,16);
	\draw (16,16) -- (16,0);
	\draw (16,0) -- (0,0);
	\draw [line width=3](0,16) -- (1,16);
	\draw [line width=3](1,16) -- (1,15);
	\draw [line width=3](1,15) -- (7,15);
	\draw [line width=3](7,15) -- (7,9);
	\draw [line width=3](7,9) -- (9,9);
	\draw [line width=3](9,9) -- (9,7);
    \draw [line width=3](9,7) -- (15,7);
    \draw [line width=3](15,7) -- (15,1);
    \draw [line width=3](15,1) -- (16,1);
    \draw [line width=3](16,1) -- (16,0);
    \draw [line width=3](0,16) -- (0,14);
    \draw [line width=3](0,14) -- (2,14);
    \draw [line width=3](2,14) -- (2,11);
    \draw [line width=3](2,11) -- (5,11);
    \draw [line width=3](5,11) -- (5,5);
    \draw [line width=3](5,5) -- (11,5);
    \draw [line width=3](11,5) -- (11,2);
    \draw [line width=3](11,2) -- (14,2);
    \draw [line width=3](14,2) -- (14,0);
    \draw [line width=3](14,0) -- (16,0);
	\draw [dotted] (0,16) -- (16,0);
	\draw [dotted] (0,0) -- (16,16);
	\draw [dotted] (0,8) -- (16,8);
	\draw [dotted] (8,0) -- (8,16);
	\node at (0.5,15.3) {{\LARGE *}};
	\node at (0.5,14.3) {{\LARGE *}};
	\node at (1.5,14.3) {{\LARGE *}};
	\node at (2.5,14.3) {{\LARGE *}};
	\node at (3.5,14.3) {{\LARGE *}};
	\node at (4.5,14.3) {{\LARGE *}};
	\node at (5.5,14.3) {{\LARGE *}};
	\node at (6.5,14.3) {{\LARGE *}};
	\node at (2.5,13.3) {{\LARGE *}};
	\node at (3.5,13.3) {{\LARGE *}};
	\node at (4.5,13.3) {{\LARGE *}};
	\node at (5.5,13.3) {{\LARGE *}};
	\node at (6.5,13.3) {{\LARGE *}};
	\node at (2.5,12.3) {{\LARGE *}};
	\node at (3.5,12.3) {{\LARGE *}};
	\node at (4.5,12.3) {{\LARGE *}};
	\node at (5.5,12.3) {{\LARGE *}};
	\node at (6.5,12.3) {{\LARGE *}};
	\node at (2.5,11.3) {{\LARGE *}};
	\node at (3.5,11.3) {{\LARGE *}};
	\node at (4.5,11.3) {{\LARGE *}};
	\node at (5.5,11.3) {{\LARGE *}};
	\node at (6.5,11.3) {{\LARGE *}};
	\node at (5.5,10.3) {{\LARGE *}};
	\node at (6.5,10.3) {{\LARGE *}};
	\node at (5.5,9.3) {{\LARGE *}};
	\node at (6.5,9.3) {{\LARGE *}};
	\node at (5.5,8.3) {{\LARGE *}};
	\node at (6.5,8.3) {{\LARGE *}};
	\node at (7.5,8.3) {{\LARGE *}};
	\node at (8.5,8.3) {{\LARGE *}};
	\node at (5.5,7.3) {{\LARGE *}};
	\node at (6.5,7.3) {{\LARGE *}};
	\node at (7.5,7.3) {{\LARGE *}};
	\node at (8.5,7.3) {{\LARGE *}};
	\node at (5.5,6.3) {{\LARGE *}};
	\node at (6.5,6.3) {{\LARGE *}};
	\node at (7.5,6.3) {{\LARGE *}};
	\node at (8.5,6.3) {{\LARGE *}};
	\node at (9.5,6.3) {{\LARGE *}};
	\node at (10.5,6.3) {{\LARGE *}};
	\node at (11.5,6.3) {{\LARGE *}};
	\node at (12.5,6.3) {{\LARGE *}};
	\node at (13.5,6.3) {{\LARGE *}};
	\node at (14.5,6.3) {{\LARGE *}};
	\node at (5.5,5.3) {{\LARGE *}};
	\node at (6.5,5.3) {{\LARGE *}};
	\node at (7.5,5.3) {{\LARGE *}};
	\node at (8.5,5.3) {{\LARGE *}};
	\node at (9.5,5.3) {{\LARGE *}};
	\node at (10.5,5.3) {{\LARGE *}};
	\node at (11.5,5.3) {{\LARGE *}};
	\node at (12.5,5.3) {{\LARGE *}};
	\node at (13.5,5.3) {{\LARGE *}};
	\node at (14.5,5.3) {{\LARGE *}};
	\node at (11.5,4.3) {{\LARGE *}};
	\node at (12.5,4.3) {{\LARGE *}};
	\node at (13.5,4.3) {{\LARGE *}};
	\node at (14.5,4.3) {{\LARGE *}};
	\node at (11.5,3.3) {{\LARGE *}};
	\node at (12.5,3.3) {{\LARGE *}};
	\node at (13.5,3.3) {{\LARGE *}};
	\node at (14.5,3.3) {{\LARGE *}};
	\node at (11.5,2.3) {{\LARGE *}};
	\node at (12.5,2.3) {{\LARGE *}};
	\node at (13.5,2.3) {{\LARGE *}};
	\node at (14.5,2.3) {{\LARGE *}};
	\node at (14.5,1.3) {{\LARGE *}};
    \node at (14.5,0.3) {{\LARGE *}};
    \node at (15.5,0.3) {{\LARGE *}};
    
\node[label=above:{1}] at (0.5,15.8) {};
\node[label=above:{6}] at (4,14.8) {};
\node[label=above:{2}] at (8,8.8){};
\node[label=above:{6}] at (12,6.8){};
\node[label=above:{1}] at (15.5,0.8){};
\node[label=left:{2}] at (0.2,15){};
\node[label=left:{3}] at (2.2,12.5){};
\node[label=left:{6}] at (5.2,8){};
\node[label=left:{3}] at (11.2,3.5){};
\node[label=left:{2}] at (14.2,1){};
\end{tikzpicture}$$
\caption{The type-C seaweed $\mathfrak{p}_{16}^C\frac{2|3}{1|6}$}
    \label{exC1}
\end{figure}
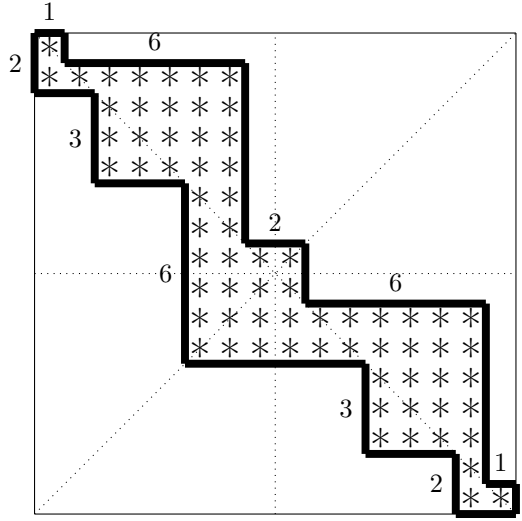
\end{Ex}

\begin{remark}\label{rem:Cchev}
We tacitly assume a standard \textup(Chevalley\textup) basis for a seaweed $\mathfrak{g}=\mathfrak{p}_{2n}^C\frac{a_1|\dots|a_m}{b_1|\dots|b_t}$ given by the union of the following sets of matrix units:
\begin{itemize}
    \item $\{e_{i,j}-e_{2n-j+1,2n-i+1}~|~1\leq i,j\leq n\},$
    \item $\{e_{i,j}~|~i+j=2n+1\},$
    \item $\{e_{i,j}+e_{2n-j+1,2n-i+1}~|~1\leq i\leq n,\text{ }n+1\leq j\leq 2n,\text{ and }i+j<2n+1\},$ and
    \item $\{e_{i,j}+e_{2n-j+1,2n-i+1}~|~n+1\leq i\leq 2n,\text{ }1\leq j\leq n,\text{ and }i+j<2n+1\},$
\end{itemize}
for all admissible locations $(i,j).$
\end{remark}

To each type-C seaweed $\g=\mathfrak{p}_{2n}^C\frac{a_1|\dots|a_m}{b_1|\dots|b_t}$ we associate a planar graph called a (\textit{type-C}) \textit{meander}, denoted by $M^C_n(\g)$ and constructed as follows.  Place $n$ vertices $v_1$ through $v_n$ in a horizontal line, and create two partitions \textup(top and bottom\textup) of the vertices based on the given partial compositions of $n$. Following the type-A construction, draw arcs in the first $m$ top blocks and the first $t$ bottom blocks.   There may be vertices left over. Now define the following sets: $T_a=\{v_i\;|\;\sum_{j=1}^ma_j<i<n\}$ and $T_b=\{v_i\;|\;\sum_{j=1}^tb_j<i<n\}$. The set $T_n^C(\g)=(T_a\cup T_b)\backslash(T_a\cap T_b)$ is the \textit{tail} of $M_n^C(\g)$, and the \textit{aftertail} $\widetilde{T}_n^C(\g)$ of $M_n^C(\g)$ is $T_a\cap T_b$. By a slight  abuse of terminology, we also refer to $T_n^C(\g)$ and $\widetilde{T}_n^C(\g)$ as the tail and aftertail of $\g.$ See Example~\ref{ex:exC}.

\begin{Ex}\label{ex:exC} Consider the seaweed  
 $\g=\mathfrak{p}_{16}^C\frac{2|3}{1|6}$ whose meander $M_8^{C}(\g)$ is illustrated in 
Figure~\ref{fig:Chalfmeander}. Note that $T_a=\{v_6,v_7,v_8\},$ $T_b=\{v_8\},$ $T_8^C(\g)=\{v_6,v_7\},$ and $\widetilde{T}_8^C(\g)=\{v_8\}.$ The vertices in $T_n^C({\g})$ and $\widetilde{T}_n^C({\g})$ are colored blue and red, respectively -- a convention we employ ongoing.

\begin{figure}[H]
$$\begin{tikzpicture}[scale=0.75]
   \def\Node{\node [circle, fill, inner sep=2pt]}
  \vertex (1) at (0,0){1};
  \vertex (2) at (1,0){2};
  \vertex (3) at (2,0){3};
  \vertex (4) at (3,0){4};
  \vertex (5) at (4,0){5};
  \vertex[fill=blue, fill opacity=0.5, text opacity=1] (6) at (5,0){6};
  \vertex[fill=blue, fill opacity=0.5, text opacity=1] (7) at (6,0){7};
  \vertex[fill=red, fill opacity=0.5, text opacity=1] (8) at (7,0){8};
  \draw (2) to[bend right=50] (1);
  \draw (5) to[bend right=50] (3);
  \draw (2) to[bend right=50] (7);
  \draw (3) to[bend right=50] (6);
  \draw (4) to[bend right=50] (5);
\end{tikzpicture}$$
\caption{The meander ${M}_8^C(\g)$}
\label{fig:Chalfmeander}
\end{figure}
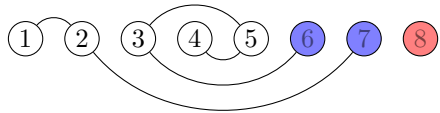
\end{Ex}

The identification of the tail in the meander leads to a combinatorial index formula for type-C seaweeds. In particular, we have the following result. 

\begin{theorem}[Coll et al. \textbf{\cite{indC}}, 2017]\label{thm:indexC}
If $\g=\mathfrak{p}_{2n}^C \frac{a_1|\dots|a_m}{b_1|\dots|b_t}$ is a type-C seaweed, then 
\begin{eqnarray}\label{indexformula}
\ind \g=2C+\tilde{P},
\end{eqnarray}

\noindent
where $C$ is the number of cycles and $\tilde{P}$ is the number of paths with zero or two endpoints in $T_n^C(\g)$.
\end{theorem}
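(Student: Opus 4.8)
The plan is to read $\ind\g$ off the Kirillov form and then match it with $2C+\tilde P$ by an induction that strips the parametrizing partial compositions one block at a time. Since, by definition, $\ind\g=\dim\g-\rank B_\varphi$ for a generic $\varphi\in\g^{*}$, and since in the Chevalley basis of Remark~\ref{rem:Cchev} the entries of $B_\varphi$ are values of $\varphi$ on brackets of (modified) matrix units -- a bracket supported only at locations obtained by ``composing'' the two contributing locations -- the nonzero pattern of $B_\varphi$ is governed precisely by adjacency in the meander $M_n^{C}(\g)$. The first step is to collect the standard index-preserving reductions for seaweeds (the block-elimination isomorphisms from the type-A literature and their type-C counterparts), which, when a part of the top or bottom composition is large relative to the remaining matrix size, identify $\g$ -- up to abelian factors whose separate contributions to both $\dim\g$ and $\ind\g$ are explicit -- with a seaweed built from strictly smaller data. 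On the meander side each such reduction deletes, contracts, or unwinds arcs, so the whole problem reduces to showing that every elementary reduction changes $\ind\g$ and changes $2C+\tilde P$ by the same integer.

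For the inductive step I would case on the configuration of a distinguished (say leftmost) block of the two compositions, following the type-A analysis but carrying two extra pieces of data: the central block of size $2(n-\sum a_i)$ and the tail $T_n^{C}(\g)$. The routine cases -- a move removing a connected component that is a cycle, or a path disjoint from the tail, or contracting a nested pair of arcs -- are handled as in type A, and one checks directly that the two sides of (\ref{indexformula}) move in lockstep. The delicate cases are those meeting the tail: deleting a path component with \emph{both} endpoints in $T_n^{C}(\g)$ should lower $\ind\g$ and $\tilde P$ each by $1$ with $C$ fixed, while deleting a path component with \emph{exactly one} endpoint in $T_n^{C}(\g)$ should leave $\ind\g$ unchanged. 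It is here that the antidiagonal symmetry defining $\mf{sp}(2n)$ (Remark~\ref{rem:Cchev}) does real work: a half-in-tail component is glued to its mirror image across the antidiagonal, and the associated block of $B_\varphi$ turns out to be nondegenerate, absorbing the would-be kernel vector. One must also dispatch the bookkeeping cases in which a move pushes a vertex into or out of the tail, or merges the tail with the central block, and check that cycles contribute $2$ uniformly.

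Throughout I would keep an exact expression for $\dim\g$ as the number of admissible matrix locations in terms of the block sizes, so that each move's net effect on $\dim\g-\rank B_\varphi$ is computable, and I would certify each local rank change by a small explicit Pfaffian (or triangularity) computation on the sub-block of $B_\varphi$ supported on the arcs involved. Using the lexicographic monovariant $\bigl(\sum a_i+\sum b_j,\ \text{number of arcs}\bigr)$ one sees the induction terminates at the base case in which both partial compositions are empty, i.e. $\g=\mf{sp}(2n)$ with an arc-free meander on $n$ isolated vertices and empty tail; there (\ref{indexformula}) reads $\ind\g=n$, the rank of $\mf{sp}(2n)$, which is immediate.

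I expect the tail to be the main obstacle: classifying correctly, for each elementary move, its effect on paths having zero, one, or two endpoints in $T_n^{C}(\g)$, and in particular proving that the one-endpoint case contributes nothing to the index. This asymmetry between ``good'' and ``bad'' paths is exactly what separates the type-C formula from the type-A formula $\ind\g=2C+P$, and it is the point at which the combinatorics of the half-meander on $n$ vertices must be reconciled with the $2n\times 2n$ symplectic picture. A secondary, purely organizational, difficulty is exhibiting a finite and provably exhaustive list of moves. One could instead attempt a single global argument: take $\varphi$ supported on the meander's edges, order the Chevalley basis by connected component of $M_n^{C}(\g)$, observe that $B_\varphi$ respects this decomposition, and compute the corank contributed by each component ($2$ for a cycle, $1$ for a good path, $0$ for a bad path); but the ``bad path'' computation again rests on precisely the antidiagonal-symmetry input above.
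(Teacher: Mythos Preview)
The paper does not prove Theorem~\ref{thm:indexC}; it is quoted from \cite{indC} (Coll, Hyatt, and Magnant, \textit{Symplectic Meanders}, 2017) and used as a black box. So there is no ``paper's own proof'' to compare against here.

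On the merits of your sketch: the inductive strategy via winding-down moves is indeed the approach taken in \cite{indC}, and the winding-down moves of Lemma~\ref{lem:windingC} in the present paper are exactly the reductions you are reaching for. Your identification of the one-endpoint-in-tail paths as the crux is correct; in the cited source this is handled not by a direct Pfaffian computation on $B_\varphi$ as you propose, but by relating the type-C seaweed to a carefully chosen type-A seaweed and invoking the already-established type-A formula. Your alternative ``global'' suggestion at the end (pick $\varphi$ supported on meander edges and compute corank component-by-component) is closer in spirit to the Dougherty framework described in Section~\ref{sec:regular} of the present paper, and could be made to work, but you would then be re-deriving the regularity of the Dougherty one-forms, which is itself nontrivial. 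What you have written is a reasonable plan, not a proof: every step marked ``one checks directly'' or ``turns out to be nondegenerate'' is where the actual work lies, and you have not carried any of it out.
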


\begin{remark}
Note that the aftertail is not involved in \textup{(\ref{indexformula})}; it is, however, essential in the construction of regular one-forms on a type-C seaweed \textup(see Section~\ref{sec:Cframework}\textup).
\end{remark}

\begin{Ex}\label{ex:indexC}
Consider $\g=\mathfrak{p}_{36}^C\frac{5|10}{2|4|3|1|1}$ with its meander $M_{18}^C(\g)$ illustrated in Figure~\ref{fig:indexC}. Notice that 
\begin{itemize}
    \item $\{v_1,v_2,v_4,v_5\}$ defines a cycle, 
    \item $\{v_{10},v_{11}\},\{v_{16}\},\{v_{17}\},$ and $\{v_{18}\}$ each define paths with zero endpoints in the tail, 
    \item $\{v_{13},v_8\}$ and $\{v_{15},v_6,v_3\}$ each define paths with exactly one endpoint in the tail, and
    \item $\{v_{14},v_7,v_9,v_{12}\}$ defines a path with two endpoints in the tail.
\end{itemize}
It now follows from Theorem~\ref{thm:indexC} that $\ind\g=2(1)+4+0+1=7$.

\begin{figure}[H]
$$\begin{tikzpicture}[scale=0.7]
    \def\Node{\node [circle, fill, inner sep=1.5pt]}
    \vertex (1) at (1,0){1};
    \vertex (2) at (2,0){2};
    \vertex (3) at (3,0){3};
    \vertex (4) at (4,0){4};
    \vertex (5) at (5,0){5};
    \vertex (6) at (6,0){6};
    \vertex (7) at (7,0){7};
    \vertex (8) at (8,0){8};
    \vertex (9) at (9,0){9};
    \vertex (10) at (10,0){10};
    \vertex (11) at (11,0){11};
    \vertex [fill=blue, fill opacity=0.5, text opacity=1] (12) at (12,0){12};
    \vertex [fill=blue, fill opacity=0.5, text opacity=1] (13) at (13,0){13};
    \vertex [fill=blue, fill opacity=0.5, text opacity=1] (14) at (14,0){14};
    \vertex [fill=blue, fill opacity=0.5, text opacity=1] (15) at (15,0){15};
    \vertex [fill=red, fill opacity=0.5, text opacity=1] (16) at (16,0){16};
    \vertex [fill=red, fill opacity=0.5, text opacity=1] (17) at (17,0){17};
    \vertex [fill=red, fill opacity=0.5, text opacity=1] (18) at (18,0){18};
    \draw (1) to[bend left=50] (5);
    \draw (2) to[bend left=50] (4);
    \draw (6) to[bend left=50] (15);
    \draw (7) to[bend left=50] (14);
    \draw (8) to[bend left=50] (13);
    \draw (9) to[bend left=50] (12);
    \draw (10) to[bend left=50] (11);
    
    \draw (1) to[bend right=50] (2);
    \draw (3) to[bend right=50] (6);
    \draw (4) to[bend right=50] (5);
    \draw (7) to[bend right=50] (9);
\end{tikzpicture}$$
    \caption{Meander $M_{18}^C(\g)$}
    \label{fig:indexC}
\end{figure}
\end{Ex}

A meander $M_n^C(\g)$ can be  contracted via a set of winding moves, each of which is predicated on the defining partial compositions of the seaweed at the time of move application. The ``winding-down'' process, detailed in Lemma \ref{lem:windingC} below,  continues until one of the partial compositions is reduced to the empty composition.   This process corresponds precisely to the reduction algorithm given in (Panyushev \textbf{\cite{Pan}}, 2001; cf. \textbf{\cite{PanRais}}, 2005) for type C, but we state the graph-theoretic version of the reduction as a set of winding moves as in \textbf{\cite{contactA}}. The \textit{signature} of the meander is the sequence of moves required to transform $M_n^C(\g)$ to a meander defined by at least one empty composition.

\begin{lemma}\label{lem:windingC}
Let $\g=\mathfrak{p}_{2n}^C\frac{a_1|\dots|a_m}{b_1|\dots|b_t}$ be a type-C seaweed with associated meander $M_n^C(\g)$ -- in this setting, we say the meander $M=M_n^C(\g)$ is of fractional form $\frac{a_1|\dots|a_m}{b_1|\dots|b_t}.$ Create a meander $M'$ by one of the following moves.
\begin{enumerate}
    \item {Pure Contraction \textup(P\textup):} If $a_1>2b_1$, then $M\mapsto M'$ of fractional form $\frac{(a_1-2b_1)|b_1|a_2|\cdots|a_m}{b_2|b_3|\cdots|b_t}$.
	\item {Block Elimination  \textup(B\textup):} If $a_1=2b_1$, then $M\mapsto M'$ of fractional form $\frac{b_1|a_2|\cdots|a_m}{b_2|b_3|\cdots|b_t}$.
    \item {Rotation Contraction \textup(R\textup):} If $b_1<a_1<2b_1$, then $M\mapsto M'$ of fractional form $\frac{b_1|a_2|\cdots|a_m}{(2b_1-a_1)|b_2|\cdots|b_t}$.
    \item {Component Deletion \textup(C\textup(c\textup)\textup):} If $a_1=b_1=c$, then $M\mapsto M'$ of fractional form $\frac{a_2|\cdots|a_m}{b_2|\cdots|b_t}$.
    \item {Flip \textup(F\textup):} If $a_1<b_1$, then $M\mapsto M'$ of fractional form $\frac{b_1|\cdots|b_t}{a_1|\cdots|a_m}$.
\end{enumerate}
  The five moves are called \textit{{winding-down moves}}.  For all moves, except the Component Deletion move, $\g$ and $\g'$ \textup(the seaweed with meander $M_k^C(\g')=M'$, $k\leq n$\textup) have the same index. 
\end{lemma}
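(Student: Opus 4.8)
The plan is to verify that each of the five winding-down moves (except Component Deletion) preserves the index by checking, case-by-case, that the combinatorial data entering the index formula (\ref{indexformula}) — the number $C$ of cycles and the number $\tilde{P}$ of paths with zero or two endpoints in the tail — is unchanged. Since Lemma~\ref{lem:windingC} asserts that these moves coincide with Panyushev's type-C reduction, one option is simply to cite \textbf{\cite{Pan}} (cf. \textbf{\cite{PanRais}}); but to keep the treatment self-contained and graph-theoretic, I would instead give a direct meander argument, mirroring the type-A analysis in \textbf{\cite{contactA}}.

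First I would fix notation: write $M = M_n^C(\g)$ with top composition beginning $a_1$ and bottom beginning $b_1$, and examine what each move does to the arcs incident to the first block. The moves P, B, and R are local surgeries confined to the leftmost block(s); the key observation is that each such move is a planar isotopy-type operation that reroutes arcs without creating or destroying cycles, and without changing how many path-components have both, one, or neither endpoint in the tail. Concretely, for Pure Contraction ($a_1 > 2b_1$) the $b_1$ bottom arcs in the first block get "absorbed" and reappear as a reindexed block of size $b_1$ on top, splitting the old first top block of size $a_1$ into pieces $a_1 - 2b_1$ and $b_1$; tracing paths through this rewiring shows a bijection between components of $M$ and components of $M'$ that preserves cycle-ness. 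I would check that vertices leaving/entering the tail $T_n^C$ are exactly compensated: the tail is determined by $\sum a_j$ versus $\sum b_j$ relative to $n$, and each move adjusts both partial sums consistently (this is where the palindromic/symplectic bookkeeping — as opposed to the type-A case — needs care). For Block Elimination and Rotation Contraction the analysis is similar, with R additionally performing a "reflection" of the leftover bottom segment, which again is a relabeling that respects the component structure. For the Flip move F, $M'$ is literally $M$ with top and bottom exchanged, hence $C$ and $\tilde{P}$ are manifestly invariant, since the notion of "endpoint in the tail" is symmetric in top/bottom (the tail $T_n^C(\g) = (T_a \cup T_b)\setminus(T_a\cap T_b)$ is symmetric in $\ul a, \ul b$).

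The step I expect to be the \textbf{main obstacle} is the tail bookkeeping under P and R — verifying that when arcs are reflected and reindexed, a component that previously had, say, two tail endpoints still has exactly two (and not zero, or one) tail endpoints in $M'$, accounting correctly for the shift in the boundary of the tail as $\sum a_j$ and $\sum b_j$ change. Because the type-C tail involves the symmetric difference $T_a \triangle T_b$ and the index formula only counts paths with \emph{zero or two} tail endpoints (the aftertail vertices being invisible to the index), one must be careful that a move cannot convert a zero-endpoint path into a one-endpoint path or vice versa. I would handle this by introducing the signed count $\#\{a\text{-tail endpoints}\} - \#\{b\text{-tail endpoints}\}$ of a component, or more simply by arguing that each winding move is realized by an ambient planar homotopy of the meander (as in the "homotopy type" discussion promised for Section~\ref{sec:seaweeds}) that continuously carries the tail region of $M$ onto the tail region of $M'$, so that no endpoint can cross the tail boundary. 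Once the component-counting bijection is established in each of the four cases P, B, R, F, the equality $\ind\g = 2C + \tilde P = 2C' + \tilde P' = \ind\g'$ follows immediately from Theorem~\ref{thm:indexC}.
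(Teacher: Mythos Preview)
The paper does not actually prove Lemma~\ref{lem:windingC}; it simply states the winding-down moves and remarks that they ``correspond precisely to the reduction algorithm given in (Panyushev \textbf{\cite{Pan}}, 2001; cf.\ \textbf{\cite{PanRais}}, 2005) for type C.'' So the paper's proof is a bare citation, which is exactly the option you mention and then set aside.

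Your alternative route --- checking directly via Theorem~\ref{thm:indexC} that each of P, B, R, F preserves both $C$ and $\tilde P$ --- is a genuinely different and more self-contained argument. The component-structure part is indeed identical to the type-A case; the new type-C content is the tail bookkeeping you flag as the obstacle, and it is in fact painless once you track the arithmetic. For each of P, B, R one checks that if $d$ vertices are removed then the new sums satisfy $\sum a' = \sum a - d$, $\sum b' = \sum b - d$, and $n' = n-d$, so under the obvious relabeling $v_{d+j}\mapsto v'_j$ the sets $T_a$, $T_b$ (hence the tail and aftertail) are carried onto $T'_a$, $T'_b$ exactly. There is no risk of a path changing its tail-endpoint count, because the tail vertices are literally the same vertices after relabeling. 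Your signed-count idea and ``ambient planar homotopy'' are therefore unnecessary; the straightforward shift does the job.

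One caution: make sure the proof of Theorem~\ref{thm:indexC} in \textbf{\cite{indC}} does not itself rely on these winding moves, or your argument becomes circular. If it does, you would need to fall back on the Panyushev citation after all --- which is, in any case, all the paper does.
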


\medskip
\noindent
Now, define the (\textit{type-C}) \textit{homotopy type}
of $\g=\mathfrak{p}_{2n}^C\frac{a_1|\dots|a_m}{b_1|\dots|b_t}$ with meander $M_n^C(\g)$ to be $$\mathcal{H}_C(c_1,\dots,c_{q_1},\bn{c_{q_1+1},\dots,c_{q_2}},\rn{c_{q_2+1}}),$$ where $C(c_i),$ $1\leq i\leq c_{q_1},$ are the component deletion moves in the signature of $M_n^C(\g),$ the fully wound-down meander is $M_k^C(\g'),$ where $k\leq n$ and $\g'=\mathfrak{p}_{2k}^C\frac{c_{q_1+1}|\dots|c_{q_2}}{\emptyset},$ and $c_{q_2+1}=2\left(k-\sum_{j=q_1+1}^{q_2}c_j\right).$ Notice that the fully wound-down meander $M_k^C(\g')$ consists only of tail and aftertail vertices, and moreover, neither the tail nor the aftertail are affected by the winding-down moves. Thus, the tail and aftertail of $M_k^C(\g')$ are identical to those of $M_n^C(\g).$ See Example~\ref{ex:windingC}.

\begin{remark}
    It is worth noting that since type-A seaweeds are based on two full compositions of the same integer $n$, there is no tail or aftertail.  Even so, the winding moves of Lemma \ref{lem:windingC} still apply and the winding-down process terminates when the empty meander is reached. Note also that a type-C seaweed may be defined by two full compositions, in which case, all relevant results coincide with those of type-A seaweeds.
\end{remark}

 
\begin{Ex}\label{ex:windingC}
The meander of fractional form $\frac{5|10}{2|4|3|1|1}$ of Example~\ref{ex:indexC} has signature $PFPFC(2)PPBC(1),$ and the resulting meander is of fractional form $\frac{3|1}{\emptyset},$ so the homotopy type is $\mathcal{H}_C(2,1,\bn{3,1},\rn{6}).$ The winding-down moves associated with the signature are illustrated in Figure~\ref{fig:windingC}.

\begin{figure}[H]
$$\begin{tikzpicture}[scale=0.28]
    \def\Node{\node [circle, fill, inner sep=1.5pt]}
    \Node (1) at (1,0){};
    \Node (2) at (2,0){};
    \Node (3) at (3,0){};
    \Node (4) at (4,0){};
    \Node (5) at (5,0){};
    \Node (6) at (6,0){};
    \Node (7) at (7,0){};
    \Node (8) at (8,0){};
    \Node (9) at (9,0){};
    \Node (10) at (10,0){};
    \Node (11) at (11,0){};
    \Node [fill=blue] (12) at (12,0){};
    \Node [fill=blue] (13) at (13,0){};
    \Node [fill=blue] (14) at (14,0){};
    \Node [fill=blue] (15) at (15,0){};
    \Node [fill=red] (16) at (16,0){};
    \Node [fill=red] (17) at (17,0){};
    \Node [fill=red] (18) at (18,0){};
    \draw (1) to[bend left=50] (5);
    \draw (2) to[bend left=50] (4);
    \draw (6) to[bend left=50] (15);
    \draw (7) to[bend left=50] (14);
    \draw (8) to[bend left=50] (13);
    \draw (9) to[bend left=50] (12);
    \draw (10) to[bend left=50] (11);
    
    \draw (1) to[bend right=50] (2);
    \draw (3) to[bend right=50] (6);
    \draw (4) to[bend right=50] (5);
    \draw (7) to[bend right=50] (9);
    
    \node at (9,-2){$\frac{5|10}{2|4|3|1|1}$};
    
    \node at (20,0){$\overset{P}{\mapsto}$};
    
    \Node (19) at (22,0){};
    \Node (20) at (23,0){};
    \Node (21) at (24,0){};
    \Node (22) at (25,0){};
    \Node (23) at (26,0){};
    \Node (24) at (27,0){};
    \Node (25) at (28,0){};
    \Node (26) at (29,0){};
    \Node (27) at (30,0){};
    \Node [fill=blue] (28) at (31,0){};
    \Node [fill=blue] (29) at (32,0){};
    \Node [fill=blue] (30) at (33,0){};
    \Node [fill=blue] (31) at (34,0){};
    \Node [fill=red] (32) at (35,0){};
    \Node [fill=red] (33) at (36,0){};
    \Node [fill=red] (34) at (37,0){};
    \draw (20) to[bend left=50] (21);
    \draw (22) to[bend left=50] (31);
    \draw (23) to[bend left=50] (30);
    \draw (24) to[bend left=50] (29);
    \draw (25) to[bend left=50] (28);
    \draw (26) to[bend left=50] (27);
    
    \draw (19) to[bend right=50] (22);
    \draw (20) to[bend right=50] (21);
    \draw (23) to[bend right=50] (25);
    
    \node at (29,-2){$\frac{1|2|10}{4|3|1|1}$};
    
    \node at (39.5,0){$\overset{FPF}{\mapsto}$};
    
    \Node (35) at (41.5,0){};
    \Node (36) at (42.5,0){};
    \Node (37) at (43.5,0){};
    \Node (38) at (44.5,0){};
    \Node (39) at (45.5,0){};
    \Node (40) at (46.5,0){};
    \Node (41) at (47.5,0){};
    \Node (42) at (48.5,0){};
    \Node[blue] (43) at (49.5,0){};
    \Node[blue] (44) at (50.5,0){};
    \Node[blue] (45) at (51.5,0){};
    \Node[blue] (46) at (52.5,0){};
    \Node[red] (47) at (53.5,0){};
    \Node[red] (48) at (54.5,0){};
    \Node[red] (49) at (55.5,0){};
    \draw (35) to[bend left=50] (36);
    \draw (37) to[bend left=50] (46);
    \draw (38) to[bend left=50] (45);
    \draw (39) to[bend left=50] (44);
    \draw (40) to[bend left=50] (43);
    \draw (41) to[bend left=50] (42);
    
    \draw (35) to[bend right=50] (36);
    \draw (38) to[bend right=50] (40);
    
    \node at (48.5,-2){$\frac{2|10}{2|1|3|1|1}$};
    
\end{tikzpicture}$$
    \label{fake1}
\end{figure}

\begin{figure}[H]
$$\begin{tikzpicture}[scale=0.28]
    \node at (-1,0){$\overset{C(2)}{\mapsto}$};
    
    \def\Node{\node [circle, fill, inner sep=1.5pt]}
    \Node (1) at (1,0){};
    \Node (2) at (2,0){};
    \Node (3) at (3,0){};
    \Node (4) at (4,0){};
    \Node (5) at (5,0){};
    \Node (6) at (6,0){};
    \Node [fill=blue] (7) at (7,0){};
    \Node [fill=blue] (8) at (8,0){};
    \Node [fill=blue] (9) at (9,0){};
    \Node [fill=blue] (10) at (10,0){};
    \Node [fill=red] (11) at (11,0){};
    \Node [fill=red] (12) at (12,0){};
    \Node [fill=red] (13) at (13,0){};
    \draw (2) to[bend right=50] (4);
    
    \draw (1) to[bend left=50] (10);
    \draw (2) to[bend left=50] (9);
    \draw (3) to[bend left=50] (8);
    \draw (4) to[bend left=50] (7);
    \draw (5) to[bend left=50] (6);
    
    \node at (6.5,-2){$\frac{10}{1|3|1|1}$};
    
    \node at (15,0){$\overset{P}{\mapsto}$};
    
    \Node (14) at (17,0){};
    \Node (15) at (18,0){};
    \Node (16) at (19,0){};
    \Node (17) at (20,0){};
    \Node (18) at (21,0){};
    \Node [fill=blue] (19) at (22,0){};
    \Node [fill=blue] (20) at (23,0){};
    \Node [fill=blue] (21) at (24,0){};
    \Node [fill=blue] (22) at (25,0){};
    \Node [fill=red] (23) at (26,0){};
    \Node [fill=red] (24) at (27,0){};
    \Node [fill=red] (25) at (28,0){};
    \draw (14) to[bend right=50] (16);
    
    \draw (14) to[bend left=50] (21);
    \draw (15) to[bend left=50] (20);
    \draw (16) to[bend left=50] (19);
    \draw (17) to[bend left=50] (18);
    
    \node at (22,-2){$\frac{8|1}{3|1|1}$};

    \node at (30,0){$\overset{P}{\mapsto}$};
    
    \Node (26) at (32,0){};
    \Node (27) at (33,0){};
    \Node [fill=blue] (28) at (34,0){};
    \Node [fill=blue] (29) at (35,0){};
    \Node [fill=blue] (30) at (36,0){};
    \Node [fill=blue] (31) at (37,0){};
    \Node [fill=red] (32) at (38,0){};
    \Node [fill=red] (33) at (39,0){};
    \Node [fill=red] (34) at (40,0){};
    
    \draw (26) to[bend left=50] (27);
    \draw (28) to[bend left=50] (30);
    
    \node at (35.5,-2){$\frac{2|3|1}{1|1}$};

    \node at (42,0){$\overset{B}{\mapsto}$};
    
    \Node (35) at (44,0){};
    \Node [fill=blue] (36) at (45,0){};
    \Node [fill=blue] (37) at (46,0){};
    \Node [fill=blue] (38) at (47,0){};
    \Node [fill=blue] (39) at (48,0){};
    \Node [fill=red] (40) at (49,0){};
    \Node [fill=red] (41) at (50,0){};
    \Node [fill=red] (42) at (51,0){};
    
    \draw (36) to[bend left=50] (38);

    \node at (47,-2){$\frac{1|3|1}{1}$};
\end{tikzpicture}$$
\label{fake2}
\end{figure}

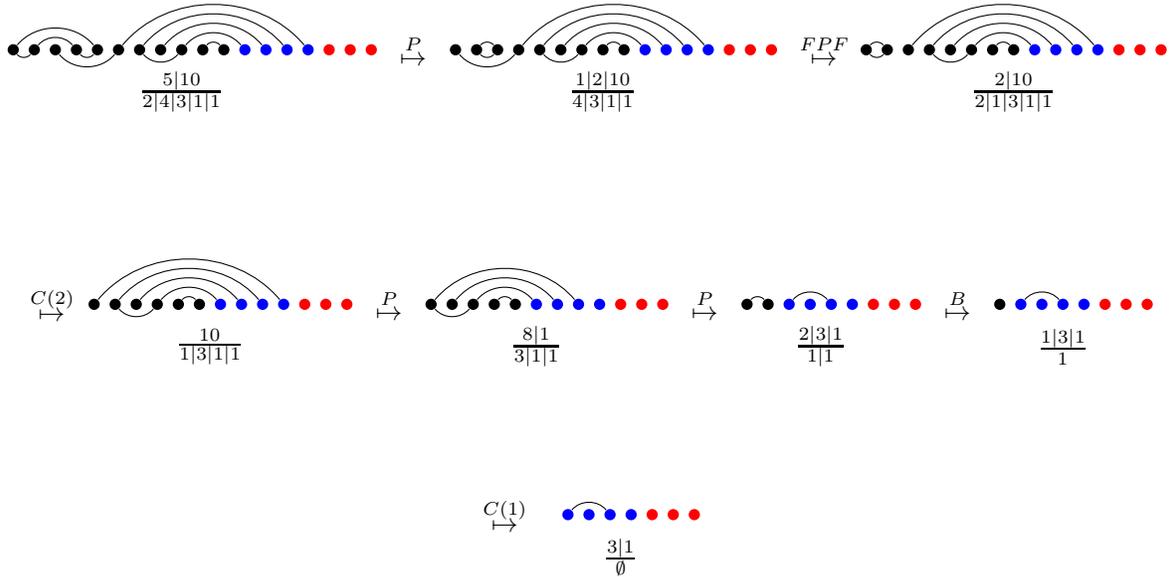
\begin{figure}[H]
$$\begin{tikzpicture}[scale=0.28]
    \node at (-2,0){$\overset{C(1)}{\mapsto}$};
    
    \def\Node{\node [circle, fill, inner sep=1.5pt]}
    \Node [fill=blue] (1) at (1,0){};
    \Node [fill=blue] (2) at (2,0){};
    \Node [fill=blue] (3) at (3,0){};
    \Node [fill=blue] (4) at (4,0){};
    \Node [fill=red] (5) at (5,0){};
    \Node [fill=red] (6) at (6,0){};
    \Node [fill=red] (7) at (7,0){};
    
    \draw (1) to[bend left=50] (3);
    
    \node at (3.5,-2){$\frac{3|1}{\emptyset}$};
\end{tikzpicture}$$
    \caption{Winding down the meander of fractional form $\frac{5|10}{2|4|3|1|1}$}
    \label{fig:windingC}
\end{figure}
\end{Ex}

The type-C homotopy type encodes essential information for the construction of regular one-forms in Section~\ref{sec:Cframework}. So although we may use Theorem~\ref{thm:indexC} to identify index-one, type-C seaweeds, it will be helpful to have the following index formula for a type-C seaweed in terms of its homotopy type.

\begin{theorem}[Dougherty \textbf{\cite{Adiss}}, Theorem 4.3.13]\label{thm:indhtC}
If $\g$ is a type-C seaweed with homotopy type $$\mathcal{H}_C(c_1,\dots,c_{q_1},\bn{c_{q_1+1},\dots,c_{q_2}},\rn{c_{q_2+1}}),$$ then $$\ind\g=\sum_{i=1}^{q_1}c_i+\sum_{i=q_1+1}^{q_2}\left\lfloor\frac{c_i}{2}\right\rfloor+\frac{c_{q_2+1}}{2}.$$
\end{theorem}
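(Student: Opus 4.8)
The plan is to avoid computing $\ind\g$ directly from the (complicated) meander $M_n^C(\g)$, and instead reduce to the fully wound-down seaweed $\g'=\mathfrak{p}_{2k}^C\frac{c_{q_1+1}|\dots|c_{q_2}}{\emptyset}$ from the definition of the homotopy type, using Lemma~\ref{lem:windingC} to relate $\ind\g$ to $\ind\g'$ and Theorem~\ref{thm:indexC} to evaluate $\ind\g'$.

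First I would track the index along the signature of $M_n^C(\g)$. By Lemma~\ref{lem:windingC} the moves $P$, $B$, $R$, $F$ preserve the index, so the only change comes from the $q_1$ Component Deletion moves $C(c_1),\dots,C(c_{q_1})$. I would show a single $C(c)$ move drops the index by exactly $c$: when $a_1=b_1=c$, the first $c$ vertices of the current meander carry exactly the top arcs $v_j\leftrightarrow v_{c+1-j}$ and the identical bottom arcs, so they form $\lfloor c/2\rfloor$ bigons (cycles) together with, when $c$ is odd, one isolated vertex which—being among the first $c$ vertices—is not a tail vertex. These $c$ vertices constitute a union of full connected components, disjoint from the rest of the meander, so by the additivity of formula~(\ref{indexformula}) over components they account for $2\lfloor c/2\rfloor+[c\text{ odd}]=c$ of the index; deleting them removes precisely this. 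Iterating over the $q_1$ Component Deletion moves (the interleaved index-preserving moves being harmless) gives $\ind\g=\ind\g'+\sum_{i=1}^{q_1}c_i$.

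Next I would compute $\ind\g'$ from $M_k^C(\g')$ via Theorem~\ref{thm:indexC}. Since the denominator of $\g'$ is empty, $M_k^C(\g')$ has no bottom arcs, hence no cycles, so $C=0$. By the definition of the homotopy type, $M_k^C(\g')$ consists only of tail and aftertail vertices: the tail is $\{v_1,\dots,v_s\}$ with $s=\sum_{j=q_1+1}^{q_2}c_j=k-\tfrac12 c_{q_2+1}$, these being exactly the vertices under the top blocks of sizes $c_{q_1+1},\dots,c_{q_2}$, while the $\tfrac12 c_{q_2+1}$ aftertail vertices $v_{s+1},\dots,v_k$ carry no arcs. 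The connected components of $M_k^C(\g')$ are therefore: for each block $c_i$ with $q_1+1\le i\le q_2$, the $\lfloor c_i/2\rfloor$ single-arc paths $v_j\leftrightarrow v_{c_i+1-j}$, each with both endpoints in the tail; when $c_i$ is odd, one single-vertex path whose vertex lies in the tail; and the $\tfrac12 c_{q_2+1}$ single-vertex paths coming from the aftertail, none of whose vertices lie in the tail. By Theorem~\ref{thm:indexC}, $\tilde P$ counts the first and third kinds of component but not the second, so $\ind\g'=\tilde P=\sum_{i=q_1+1}^{q_2}\lfloor c_i/2\rfloor+\tfrac12 c_{q_2+1}$. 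Combined with the previous paragraph this is exactly the claimed formula.

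The step I expect to require the most care is the bookkeeping in the third paragraph: correctly invoking the convention in Theorem~\ref{thm:indexC} whereby a one-vertex path contributes to $\tilde P$ precisely when its vertex is \emph{not} a tail vertex, so that the middle vertices of odd-sized top blocks in $M_k^C(\g')$ are silent while each aftertail vertex contributes $1$. One must also confirm that the tail and aftertail partition the vertex set of $M_k^C(\g')$ exactly as asserted—equivalently, that no winding-down move creates or destroys a tail or aftertail vertex—which is where the structure packaged into the homotopy type is genuinely used. The cycle/path count accounting for the $C(c)$ move in the second paragraph is routine once one observes that the deleted vertices form a self-contained set of components.
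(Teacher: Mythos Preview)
The paper does not prove this theorem; it is quoted as Theorem~4.3.13 of Dougherty's dissertation \textbf{\cite{Adiss}} and used as a black box to obtain Corollary~\ref{cor:ind1htC}. So there is no ``paper's own proof'' to compare against.

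Your argument is correct and, pleasantly, uses only ingredients already in the paper (Lemma~\ref{lem:windingC} and Theorem~\ref{thm:indexC}). The two main steps are both sound. For the $C(c)$ move: since $a_1=b_1=c$ at the moment of deletion, the first $c$ vertices sit inside both a top block and a bottom block, hence lie in neither $T_a$ nor $T_b$; they therefore form $\lfloor c/2\rfloor$ bigons together with (for $c$ odd) one isolated non-tail vertex, and the additivity of formula~(\ref{indexformula}) gives the drop of exactly $c$. For the wound-down meander $M_k^C(\g')$: with $\underline{b}=\emptyset$ one has $T_b=\{v_1,\dots,v_k\}$ and $T_a=\{v_{s+1},\dots,v_k\}$, so the tail is $\{v_1,\dots,v_s\}$ and the aftertail is $\{v_{s+1},\dots,v_k\}$, exactly as you assert; the count of $\tilde P$ then falls out.

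The only delicate convention you flagged---how a single-vertex path interacts with $\tilde P$---is resolved consistently with the paper's own usage: in Example~\ref{ex:indexC} the isolated aftertail vertices $\{v_{16}\},\{v_{17}\},\{v_{18}\}$ are counted as paths with \emph{zero} endpoints in the tail, i.e., an isolated vertex is treated as a path with a single endpoint. Under that reading, an isolated vertex in the tail contributes one endpoint and is excluded from $\tilde P$, while an isolated aftertail vertex contributes zero and is included, which is exactly what your computation needs.
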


\begin{corollary}\label{cor:ind1htC}
A type-C seaweed $\g$ has index one if and only if it has one of the following homotopy types:
\begin{enumerate}
    \item[\textup{1a.}] $\mathcal{H}_C({\color{blue}1,\dots,1,2,1,\dots,1})$,
    \item[\textup{1b.}] $\mathcal{H}_C({\color{blue}1,\dots,1,3,1,\dots,1}),$
    \item[\textup{2.}] $\mathcal{H}_C(\color{blue}1,\dots,1,{\color{red}2}),$ or
    \item[\textup{3.}] $\mathcal{H}_C(1,{\color{blue}1,\dots,1})$.
\end{enumerate}
\end{corollary}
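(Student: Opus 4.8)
The plan is to read the corollary directly off the index formula of Theorem~\ref{thm:indhtC}. Given a type-C seaweed $\g$ with homotopy type $\mathcal{H}_C(c_1,\dots,c_{q_1},\bn{c_{q_1+1},\dots,c_{q_2}},\rn{c_{q_2+1}})$, set
$A=\sum_{i=1}^{q_1}c_i$, $B=\sum_{i=q_1+1}^{q_2}\lfloor c_i/2\rfloor$, and $D=c_{q_2+1}/2$, so that Theorem~\ref{thm:indhtC} reads $\ind\g=A+B+D$. First I would record the arithmetic constraints forced by the construction of the homotopy type in Section~\ref{sec:seaweeds}: the component-deletion entries $c_1,\dots,c_{q_1}$ and the blue entries $c_{q_1+1},\dots,c_{q_2}$ are parts of compositions, hence each is at least $1$, while $c_{q_2+1}=2(k-\sum_{j=q_1+1}^{q_2}c_j)$ is a nonnegative even integer. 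Consequently $A,B,D$ are nonnegative integers, and (using $\lfloor c/2\rfloor=0 \iff c=1$ for $c\ge 1$) one has $B=0$ precisely when every blue entry equals $1$. Throughout I adopt the convention already in force that a run ``$1,\dots,1$'' may be empty and that $c_{q_2+1}$ is suppressed from the notation when it is $0$.

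For the ``if'' direction I would substitute each of the four homotopy types 1a, 1b, 2, 3 into $A+B+D$. In 1a and 1b we have $q_1=0$ and $c_{q_2+1}=0$, so $A=D=0$ and $B=\lfloor 2/2\rfloor=1$ (respectively $B=\lfloor 3/2\rfloor=1$); in 2 we have $q_1=0$, every blue entry equal to $1$, and $c_{q_2+1}=2$, so $A=B=0$ and $D=1$; in 3 we have $q_1=1$, $c_1=1$, every blue entry equal to $1$, and $c_{q_2+1}=0$, so $B=D=0$ and $A=1$. In every case $\ind\g=1$.

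For the ``only if'' direction, assume $\ind\g=1$. Since $A,B,D$ are nonnegative integers with $A+B+D=1$, exactly one of them equals $1$ and the other two vanish, giving three cases. If $A=1$: because each $c_i\ge 1$ for $1\le i\le q_1$, the equation $\sum_{i=1}^{q_1}c_i=1$ forces $q_1=1$ and $c_1=1$; then $B=0$ forces every blue entry to equal $1$, and $D=0$ forces $c_{q_2+1}=0$, so $\g$ has homotopy type 3. If $B=1$: $A=0$ forces $q_1=0$ and $D=0$ forces $c_{q_2+1}=0$, while $\sum\lfloor c_i/2\rfloor=1$ with nonnegative integer summands forces exactly one blue entry $c_j$ with $\lfloor c_j/2\rfloor=1$, i.e.\ $c_j\in\{2,3\}$, all other blue entries being $1$; this is type 1a or 1b. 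If $D=1$: $A=0$ forces $q_1=0$, $B=0$ forces every blue entry to equal $1$, and $c_{q_2+1}=2$, which is type 2. These three cases exhaust the possibilities, completing the proof.

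I do not expect a genuine obstacle here, since the corollary is essentially a finite case check against Theorem~\ref{thm:indhtC}; the only points requiring care are the bookkeeping of the degenerate homotopy types (empty runs of $1$'s, the suppressed red entry) and, crucially, invoking the positivity $c_i\ge 1$ for $i\le q_2$ — this is what collapses ``$B=0$'' to ``every blue entry is $1$'' rather than merely ``every blue entry is at most $1$'', and hence what pins the answer down to exactly these four families. Note also that the argument uses Theorem~\ref{thm:indhtC} in both directions and requires no realizability statement, since in each direction the homotopy type of $\g$ is given.
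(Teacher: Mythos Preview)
Your proposal is correct and is exactly the approach the paper has in mind: the corollary is stated without proof immediately after Theorem~\ref{thm:indhtC}, and your case analysis on $A+B+D=1$ is precisely the intended (and only natural) way to read it off. The only mild caveat is that the paper does not explicitly record the convention that $\rn{c_{q_2+1}}$ is suppressed when it equals $0$, but your reading is the one forced by the index formula and the statement of the corollary.
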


\noindent
Note that in the above cases, the sets of blue 1's are allowed to be empty.

\section{Framework for Regular Forms }\label{sec:regular}



In her dissertation \textbf{\cite{Adiss}}, Dougherty establishes an inductive framework for the explicit construction of multiple \textit{regular} (index-realizing) one-forms -- each having associated Kirillov forms with easily describable kernels -- on, in particular, type-A and type-C seaweeds (cf. \textbf{\cite{aria}}). On index-one, type-C seaweeds, we claim that a subset of such regular forms are, in fact, contact, and the proof of this claim is the subject of Section~\ref{sec:main}. First, we establish the \textit{Dougherty Framework} for type-C seaweeds, which requires some preliminary constructions.


\subsection{The Component Meander}
Recall that the meander of the seaweed $\g=\mathfrak{p}_{2n}^C\frac{a_1|\dots|a_m}{b_1|\dots|b_t},$ where $(a_1,\dots,a_m)$ and $(b_1,\dots,b_m)$ are partial compositions of $n$, has $n$ vertices. Also recall that $\g$ may be written as $$\g=\mathfrak{p}_{2n}^C\frac{a_1|\dots|a_m|2\left(n-\sum_{i=1}^ma_i\right)|a_m|\dots|a_1}{b_1|\dots|b_t|2\left(n-\sum_{i=1}^tb_i\right)|b_t|\dots|b_1},$$ so define the meander with $2n$ vertices and fractional form $\frac{a_1|\dots|a_m|2(n-\sum_{i=1}^ma_i)|a_m|\dots|a_1}{b_1|\dots|b_t|2(n-\sum_{i=1}^tb_i)|b_t|\dots|b_1}$ to be the \textit{full meander} $M_{2n}^C(\g).$ Placing the standard (counterclockwise) orientation on $M_{2n}^C(\g)$ yields the \textit{oriented full meander} $\overrightarrow{M}_{2n}^C(\g).$ Note that $M_{2n}^C(\g)$ is constructed using two ``full" compositions of $2n,$ and graphically, the full meander is two horizontally mirrored copies of $M_n^C(\g)$ with additional edges incident to the tail and aftertail vertices of $M_n^C(\g).$ See Example~\ref{ex:fullmeander1}.


\begin{Ex}\label{ex:fullmeander1}
Consider the seaweed $\g=\mathfrak{p}_{18}^C\frac{5|10}{2|4|3|1|1}$
whose full meander $M_{2 n}^C(\g)$ is illustrated in Figure \ref{fig:fullmeander1}. We maintain the established coloring scheme and similarly color the edges in $M_{2 n}^C(\g)$ according to whether they participate in a ``tail component" or an ``aftertail component", i.e., a component containing a tail vertex or an aftertail vertex, respectively.

\begin{figure}[H]
$$\begin{tikzpicture}[scale=0.28]
    \def\Node{\node [circle, fill, inner sep=1.5pt]}
    \Node (1) at (1,0){};
    \Node (2) at (2,0){};
    \Node (3) at (3,0){};
    \Node (4) at (4,0){};
    \Node (5) at (5,0){};
    \Node (6) at (6,0){};
    \Node (7) at (7,0){};
    \Node (8) at (8,0){};
    \Node (9) at (9,0){};
    \Node (10) at (10,0){};
    \Node (11) at (11,0){};
    \Node [blue] (12) at (12,0){};
    \Node [blue] (13) at (13,0){};
    \Node [blue] (14) at (14,0){};
    \Node [blue] (15) at (15,0){};
    \Node [red] (16) at (16,0){};
    \Node [red] (17) at (17,0){};
    \Node [red] (18) at (18,0){};
    \Node [red] (1800) at (19,0){};
    \Node [red] (1700) at (20,0){};
    \Node [red] (1600) at (21,0){};
    \Node [blue] (1500) at (22,0){};
    \Node [blue] (1400) at (23,0){};
    \Node [blue] (1300) at (24,0){};
    \Node [blue] (1200) at (25,0){};
    \Node (1100) at (26,0){};
    \Node (1000) at (27,0){};
    \Node (900) at (28,0){};
    \Node (800) at (29,0){};
    \Node (700) at (30,0){};
    \Node (600) at (31,0){};
    \Node (500) at (32,0){};
    \Node (400) at (33,0){};
    \Node (300) at (34,0){};
    \Node (200) at (35,0){};
    \Node (100) at (36,0){};
    \draw (1) to[bend left=60] (5);
    \draw (2) to[bend left=60] (4);
    \draw [line width=0.45mm, blue] (6) to[bend left=60] (15);
    \draw [line width=0.45mm, blue] (7) to[bend left=60] (14);
    \draw [line width=0.45mm, blue] (8) to[bend left=60] (13);
    \draw [line width=0.45mm, blue] (9) to[bend left=60] (12);
    \draw (10) to[bend left=60] (11);
    
    \draw (1) to[bend right=60] (2);
    \draw [line width=0.45mm, blue] (3) to[bend right=60] (6);
    \draw (4) to[bend right=60] (5);
    \draw [line width=0.45mm, blue] (7) to[bend right=60] (9);
    
    \draw (100) to[bend right=60] (500);
    \draw (200) to[bend right=60] (400);
    
    \draw [line width=0.45mm, blue] (600) to[bend right=60] (1500);
    \draw [line width=0.45mm, blue] (700) to[bend right=60] (1400);
    \draw [line width=0.45mm, blue] (800) to[bend right=60] (1300);
    \draw [line width=0.45mm, blue] (900) to[bend right=60] (1200);
    \draw (1000) to[bend right=60] (1100);
    
    \draw (100) to[bend left=60] (200);
    \draw [line width=0.45mm, blue] (300) to[bend left=60] (600);
    \draw (400) to[bend left=60] (500);
    \draw [line width=0.45mm, blue] (700) to[bend left=60] (900);
    
    \draw [line width=0.45mm, blue] (12) to[bend right=60] (1200);
    \draw [line width=0.45mm, blue] (13) to[bend right=60] (1300);
    \draw [line width=0.45mm, blue] (14) to[bend right=60] (1400);
    \draw [line width=0.45mm, blue] (15) to[bend right=60] (1500);
    \draw [line width=0.45mm, red] (16) to[bend right=60] (1600) to[bend right=60] (16);
    \draw [line width=0.45mm, red] (17) to[bend right=60] (1700) to[bend right=60] (17);
    \draw [line width=0.45mm, red] (18) to[bend right=60] (1800) to[bend right=60] (18);
    
    \draw[dashed] (18.5,2)--(18.5,-4);
    
    \node at (18.5,2.5){\small mirror};
\end{tikzpicture}$$
\caption{The full meander $M_{2 n}^C(\g)$ }
\label{fig:fullmeander1}
\end{figure}
\end{Ex}

\medskip
For the next construction, we first note that the 
winding moves of Lemma~\ref{lem:windingC} apply to $M_{2n}^C(\g)$; that is, we may apply the winding moves to the meander of fractional form $\frac{a_1|\dots|a_m|2n-2\sum_{i=1}^ma_i|a_m|\dots|a_1}{b_1|\dots|b_t|2n-2\sum_{j=1}^tb_j|b_t|\dots|b_1}.$ The sequence of winding moves used to contract $M_{2n}^C(\g)$ is similarly called the \textit{signature} of $M_{2n}^C(\g),$ and we define the \textit{component meander} $CM(\g)$ of $\g$ as the meander with the same signature as $M_{2n}^C(\g)$ except that the parameter $c$ in all Component Deletion moves are modified so that $c=1$. The edges of $CM(\g)$ can be oriented counterclockwise to yield the 
\textit{oriented component meander}, which we denote by 
$\overrightarrow{CM}(\g)$.  See Example \ref{componentmeander}.

\begin{Ex}\label{componentmeander}
Consider, once again, the seaweed $\g=\mathfrak{p}_{18}^C\frac{5|10}{2|4|3|1|1}$
whose oriented component meander $\overrightarrow{CM}(\g)$ is illustrated in Figure \ref{fig:genCmeander}.
\end{Ex}

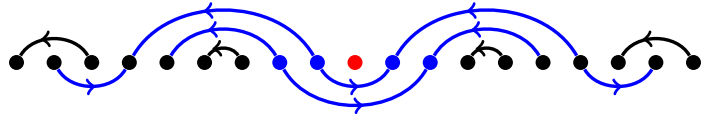
\begin{figure}[H]
$$\begin{tikzpicture}[scale=0.5]\label{colored}
\def\Node{\node [circle, fill, inner sep=2pt]}
\tikzset{->-/.style={decoration={
  markings,
  mark=at position .55 with {\arrow{>}}},postaction={decorate}}}
  \tikzset{-<-/.style={decoration={
  markings,
  mark=at position .45 with {\arrow{<}}},postaction={decorate}}}

\Node (1) at (0,0){};
\Node (2) at (1,0){};
\Node (3) at (2,0){};
\Node (4) at (3,0){};
\Node (5) at (4,0){};
\Node (6) at (5,0){};
\Node (7) at (6,0){};
\Node[blue] (8) at (7,0){};
\Node[blue] (9) at (8,0){};
\Node[red] (10) at (9,0){};
\Node[blue] (11) at (10,0){};
\Node[blue] (12) at (11,0){};
\Node (13) at (12,0){};
\Node (14) at (13,0){};
\Node (15) at (14,0){};
\Node (16) at (15,0){};
\Node (17) at (16,0){};
\Node (18) at (17,0){};
\Node (19) at (18,0){};

\draw[-<-, line width=0.45mm] (1) to[bend left=60] (3);
\draw[line width=0.45mm, color=blue,->-] (2) to[bend right=60] (4);
\draw[line width=0.45mm, color=blue,-<-] (4) to[bend left=60] (9);
\draw[line width=0.45mm, color=blue,-<-] (5) to[bend left=60] (8);
\draw[-<-, line width=0.45mm] (6) to[bend left=60] (7);
\draw[line width=0.45mm, color=blue,->-] (9) to[bend right=60] (11);
\draw[line width=0.45mm, color=blue,->-] (8) to[bend right=60] (12);
\draw[-<-, line width=0.45mm] (13) to[bend left=60] (14);
\draw[line width=0.45mm, color=blue,-<-] (12) to[bend left=60] (15);
\draw[line width=0.45mm, color=blue,-<-] (11) to[bend left=60] (16);
\draw[line width=0.45mm, color=blue,->-] (16) to[bend right=60] (18);
\draw[-<-, line width=0.45mm] (17) to[bend left=60] (19);
\end{tikzpicture}$$
\vspace{-2em}
\caption{The oriented component meander $\protect\overrightarrow{CM}(\g)$}
\label{fig:genCmeander}
\end{figure}

In Section~\ref{coreandpeak}, the component meander of a seaweed is used to define distinguished subsets of admissible locations of the seaweed called the ``core" and ``peak." These, in turn, allow for the identification of the summands of a regular one-form on the seaweed.

\subsection{The Core and Peak Sets}\label{coreandpeak}
In order to define the core and peak of $\g=\mathfrak{p}^C_{2n}\frac{a_1|\dots|a_m}{b_1|\dots|b_t}$, we first require a certain decomposition of the underlying vector space of $\mathfrak{g}.$ If $\mathfrak{g}$ has homotopy type $\mathcal{H}_C(c_1,\dots,c_{q_1},\bn{c_{q_1+1},\dots,c_{q_2}},\rn{c_{q_2+1}}),$ then the \textit{vector space decomposition} of $\g$ is
\begin{equation}\label{eqn:decomp}
\g=\bigoplus_{i=1}^{q_2+1}\g|_{c_i},
\end{equation}
where $\mathfrak{g}|_{c_i}$ is the subspace of $\g$ corresponding to a particular component in $M_{2 n}^C(\g)$ of size $c_i.$ Note that, in this notation, two distinct subspaces corresponding to components of equal size are indistinguishable, except if the components' interactions with the tail and aftertail differ. See Example~\ref{ex:vsdecompC}.

Define the sets 
$$V_{c_i}=\{I\;|\;v_{I}\text{ is a vertex on the path of $CM(\g)$ corresponding to $c_i$}\},$$

\vspace{-1.5em}

$$\textgoth{C}_{c_i}=\{I\times I\;|\;I\in V_{c_i}\}, \text{and}$$ 
$$\textgoth{P}_{c_i}=\{I\times J\;|\;I,J\in V_{c_i}\text{ and }(v_I,v_J) \text{ is an edge in } \overrightarrow{CM}(\g)\}.$$

\noindent The set $\textgoth{C}_{c_i}$ is the 
\textit{core set} of $\g|_{c_i}$ -- the set of $c_i\times c_i$ blocks on the diagonal of $\g$ contained in $\g|_{c_i}$ -- and $\textgoth{P}_{c_i}$ is the \textit{peak set} of $\g|_{c_i}.$

Now, define the \textit{core $\textgoth{C}_{\g}$ of $\g$} and the \textit{peak $\textgoth{P}_\g$ of $\g$} as the union of the core and peak sets, respectively, of the components in the vector space decomposition (\ref{eqn:decomp}); that is,

$$\textgoth{C}_\g=\bigcup_{i=1}^{q_2+1}\textgoth{C}_{c_i}\hspace{2em}\text{ and }\hspace{2em}\textgoth{P}_\g=\bigcup_{i=1}^{q_2+1}\textgoth{P}_{c_i}.$$

\noindent 
We refer to elements of $\textgoth{C}_{\g}$ and $\textgoth{P}_{\g}$ as \textit{core blocks} and \textit{peak blocks}, respectively.

\begin{Ex}\label{ex:vsdecompC}
Consider our running example type-C seaweed $\g=\g_{18}^C\frac{5|10}{2|4|3|1|1}$. Note that $\g$ has homotopy type $\mathcal{H}_C(2,1,\bn{3,1},\rn{6}),$ yielding the vector space decomposition $$\g=\g|_2\oplus\g|_1\oplus\g|_{\bn{3}}\oplus\g|_{\bn{1}}\oplus\g|_{\rn{6}}.$$ The subspaces $\g|_{c_i}$ are outlined and colored in Figure~\ref{fig:vspacedecompC}. Since each is contained in $\g\subset\mathfrak{sp}(18),$ the subspaces $\g|_{c_i}$ maintain the necessary symmetry conditions about the antidiagonal; for example, a basis for $\g|_1$ is given by $\{e_{10,10}-e_{27,27},e_{11,10}-e_{27,26},e_{11,11}-e_{26,26}\}.$
\begin{figure}[H]
$$\begin{tikzpicture}[scale=0.27]
    \def\Node{\node [circle, fill, inner sep=1.1pt]}
    \draw (0,0)--(36,0)--(36,36)--(0,36)--(0,0);
    \draw [line width=.65mm] (0,36)--(0,31)--(5,31)--(5,21)--(15,21)--(15,15)--(21,15)--(21,5)--(31,5)--(31,0)--(36,0)--(36,2)--(34,2)--(34,6)--(30,6)--(30,9)--(27,9)--(27,10)--(26,10)--(26,11)--(25,11)--(25,25)--(11,25)--(11,26)--(10,26)--(10,27)--(9,27)--(9,30)--(6,30)--(6,34)--(2,34)--(2,36)--(0,36);
    \draw [dotted] (0,0)--(36,36)--(0,36)--(36,0)--(18,0)--(18,36);
    \draw [dotted] (0,18)--(36,18);
    \draw [line width=.45mm] (5,31)--(5,33)--(2,33)--(2,34);
    \draw [line width=.45mm] (31,5)--(33,5)--(33,2)--(34,2);
    \draw [line width=.45mm] (6,30)--(6,22)--(22,22)--(22,6)--(30,6);
    \draw [line width=.45mm] (9,27)--(9,25)--(11,25);
    \draw [line width=.45mm] (27,9)--(25,9)--(25,11);
    \draw [line width=.45mm] (15,21)--(21,21)--(21,15);
    
    \draw [fill=blue, fill opacity=0.5] (2,34)--(6,34)--(6,30)--(9,30)--(9,25)--(25,25)--(25,9)--(30,9)--(30,6)--(34,6)--(34,2)--(33,2)--(33,5)--(21,5)--(21,21)--(5,21)--(5,33)--(2,33)--(2,34);
    \draw [fill=red, opacity=0.5] (15,21)--(21,21)--(21,15)--(15,15)--(15,21);
    
    \draw[line width=.45mm] (6,30)--(6,22)--(22,22)--(22,6)--(30,6);
    
    \node at (0.5,35.2){*};
    \node at (1.5,35.2){*};
    \node at (0.5,34.2){*};
    \node at (1.5,34.2){*};
    \node at (0.5,33.2){*};
    \node at (1.5,33.2){*};
    \node at (2.5,33.2){*};
    \node at (3.5,33.2){*};
    \node at (4.5,33.2){*};
    \node at (5.5,33.2){*};
    \node at (0.5,32.2){*};
    \node at (1.5,32.2){*};
    \node at (2.5,32.2){*};
    \node at (3.5,32.2){*};
    \node at (4.5,32.2){*};
    \node at (5.5,32.2){*};
    \node at (0.5,31.2){*};
    \node at (1.5,31.2){*};
    \node at (2.5,31.2){*};
    \node at (3.5,31.2){*};
    \node at (4.5,31.2){*};
    \node at (5.5,31.2){*};
    \node at (5.5,30.2){*};
    \node at (5.5,29.2){*};
    \node at (6.5,29.2){*};
    \node at (7.5,29.2){*};
    \node at (8.5,29.2){*};
    \node at (5.5,28.2){*};
    \node at (6.5,28.2){*};
    \node at (7.5,28.2){*};
    \node at (8.5,28.2){*};
    \node at (5.5,27.2){*};
    \node at (6.5,27.2){*};
    \node at (7.5,27.2){*};
    \node at (8.5,27.2){*};
    \node at (5.5,26.2){*};
    \node at (6.5,26.2){*};
    \node at (7.5,26.2){*};
    \node at (8.5,26.2){*};
    \node at (9.5,26.2){*};
    \node at (5.5,25.2){*};
    \node at (6.5,25.2){*};
    \node at (7.5,25.2){*};
    \node at (8.5,25.2){*};
    \node at (9.5,25.2){*};
    \node at (10.5,25.2){*};
    \node at (5.5,24.2){*};
    \node at (6.5,24.2){*};
    \node at (7.5,24.2){*};
    \node at (8.5,24.2){*};
    \node at (9.5,24.2){*};
    \node at (10.5,24.2){*};
    \node at (11.5,24.2){*};
    \node at (12.5,24.2){*};
    \node at (13.5,24.2){*};
    \node at (14.5,24.2){*};
    \node at (15.5,24.2){*};
    \node at (16.5,24.2){*};
    \node at (17.5,24.2){*};
    \node at (18.5,24.2){*};
    \node at (19.5,24.2){*};
    \node at (20.5,24.2){*};
    \node at (21.5,24.2){*};
    \node at (22.5,24.2){*};
    \node at (23.5,24.2){*};
    \node at (24.5,24.2){*};
    \node at (5.5,23.2){*};
    \node at (6.5,23.2){*};
    \node at (7.5,23.2){*};
    \node at (8.5,23.2){*};
    \node at (9.5,23.2){*};
    \node at (10.5,23.2){*};
    \node at (11.5,23.2){*};
    \node at (12.5,23.2){*};
    \node at (13.5,23.2){*};
    \node at (14.5,23.2){*};
    \node at (15.5,23.2){*};
    \node at (16.5,23.2){*};
    \node at (17.5,23.2){*};
    \node at (18.5,23.2){*};
    \node at (19.5,23.2){*};
    \node at (20.5,23.2){*};
    \node at (21.5,23.2){*};
    \node at (22.5,23.2){*};
    \node at (23.5,23.2){*};
    \node at (24.5,23.2){*};
    \node at (5.5,22.2){*};
    \node at (6.5,22.2){*};
    \node at (7.5,22.2){*};
    \node at (8.5,22.2){*};
    \node at (9.5,22.2){*};
    \node at (10.5,22.2){*};
    \node at (11.5,22.2){*};
    \node at (12.5,22.2){*};
    \node at (13.5,22.2){*};
    \node at (14.5,22.2){*};
    \node at (15.5,22.2){*};
    \node at (16.5,22.2){*};
    \node at (17.5,22.2){*};
    \node at (18.5,22.2){*};
    \node at (19.5,22.2){*};
    \node at (20.5,22.2){*};
    \node at (21.5,22.2){*};
    \node at (22.5,22.2){*};
    \node at (23.5,22.2){*};
    \node at (24.5,22.2){*};
    \node at (5.5,21.2){*};
    \node at (6.5,21.2){*};
    \node at (7.5,21.2){*};
    \node at (8.5,21.2){*};
    \node at (9.5,21.2){*};
    \node at (10.5,21.2){*};
    \node at (11.5,21.2){*};
    \node at (12.5,21.2){*};
    \node at (13.5,21.2){*};
    \node at (14.5,21.2){*};
    \node at (15.5,21.2){*};
    \node at (16.5,21.2){*};
    \node at (17.5,21.2){*};
    \node at (18.5,21.2){*};
    \node at (19.5,21.2){*};
    \node at (20.5,21.2){*};
    \node at (21.5,21.2){*};
    \node at (22.5,21.2){*};
    \node at (23.5,21.2){*};
    \node at (24.5,21.2){*};
    \node at (15.5,20.2){*};
    \node at (16.5,20.2){*};
    \node at (17.5,20.2){*};
    \node at (18.5,20.2){*};
    \node at (19.5,20.2){*};
    \node at (20.5,20.2){*};
    \node at (21.5,20.2){*};
    \node at (22.5,20.2){*};
    \node at (23.5,20.2){*};
    \node at (24.5,20.2){*};
    \node at (15.5,19.2){*};
    \node at (16.5,19.2){*};
    \node at (17.5,19.2){*};
    \node at (18.5,19.2){*};
    \node at (19.5,19.2){*};
    \node at (20.5,19.2){*};
    \node at (21.5,19.2){*};
    \node at (22.5,19.2){*};
    \node at (23.5,19.2){*};
    \node at (24.5,19.2){*};
    \node at (15.5,18.2){*};
    \node at (16.5,18.2){*};
    \node at (17.5,18.2){*};
    \node at (18.5,18.2){*};
    \node at (19.5,18.2){*};
    \node at (20.5,18.2){*};
    \node at (21.5,18.2){*};
    \node at (22.5,18.2){*};
    \node at (23.5,18.2){*};
    \node at (24.5,18.2){*};
    \node at (15.5,17.2){*};
    \node at (16.5,17.2){*};
    \node at (17.5,17.2){*};
    \node at (18.5,17.2){*};
    \node at (19.5,17.2){*};
    \node at (20.5,17.2){*};
    \node at (21.5,17.2){*};
    \node at (22.5,17.2){*};
    \node at (23.5,17.2){*};
    \node at (24.5,17.2){*};
    \node at (15.5,16.2){*};
    \node at (16.5,16.2){*};
    \node at (17.5,16.2){*};
    \node at (18.5,16.2){*};
    \node at (19.5,16.2){*};
    \node at (20.5,16.2){*};
    \node at (21.5,16.2){*};
    \node at (22.5,16.2){*};
    \node at (23.5,16.2){*};
    \node at (24.5,16.2){*};
    \node at (15.5,15.2){*};
    \node at (16.5,15.2){*};
    \node at (17.5,15.2){*};
    \node at (18.5,15.2){*};
    \node at (19.5,15.2){*};
    \node at (20.5,15.2){*};
    \node at (21.5,15.2){*};
    \node at (22.5,15.2){*};
    \node at (23.5,15.2){*};
    \node at (24.5,15.2){*};
    \node at (21.5,14.2){*};
    \node at (22.5,14.2){*};
    \node at (23.5,14.2){*};
    \node at (24.5,14.2){*};
    \node at (21.5,13.2){*};
    \node at (22.5,13.2){*};
    \node at (23.5,13.2){*};
    \node at (24.5,13.2){*};
    \node at (21.5,12.2){*};
    \node at (22.5,12.2){*};
    \node at (23.5,12.2){*};
    \node at (24.5,12.2){*};
    \node at (21.5,11.2){*};
    \node at (22.5,11.2){*};
    \node at (23.5,11.2){*};
    \node at (24.5,11.2){*};
    \node at (21.5,10.2){*};
    \node at (22.5,10.2){*};
    \node at (23.5,10.2){*};
    \node at (24.5,10.2){*};
    \node at (25.5,10.2){*};
    \node at (21.5,9.2){*};
    \node at (22.5,9.2){*};
    \node at (23.5,9.2){*};
    \node at (24.5,9.2){*};
    \node at (25.5,9.2){*};
    \node at (26.5,9.2){*};
    \node at (21.5,8.2){*};
    \node at (22.5,8.2){*};
    \node at (23.5,8.2){*};
    \node at (24.5,8.2){*};
    \node at (25.5,8.2){*};
    \node at (26.5,8.2){*};
    \node at (27.5,8.2){*};
    \node at (28.5,8.2){*};
    \node at (29.5,8.2){*};
    \node at (21.5,7.2){*};
    \node at (22.5,7.2){*};
    \node at (23.5,7.2){*};
    \node at (24.5,7.2){*};
    \node at (25.5,7.2){*};
    \node at (26.5,7.2){*};
    \node at (27.5,7.2){*};
    \node at (28.5,7.2){*};
    \node at (29.5,7.2){*};
    \node at (21.5,6.2){*};
    \node at (22.5,6.2){*};
    \node at (23.5,6.2){*};
    \node at (24.5,6.2){*};
    \node at (25.5,6.2){*};
    \node at (26.5,6.2){*};
    \node at (27.5,6.2){*};
    \node at (28.5,6.2){*};
    \node at (29.5,6.2){*};
    \node at (21.5,5.2){*};
    \node at (22.5,5.2){*};
    \node at (23.5,5.2){*};
    \node at (24.5,5.2){*};
    \node at (25.5,5.2){*};
    \node at (26.5,5.2){*};
    \node at (27.5,5.2){*};
    \node at (28.5,5.2){*};
    \node at (29.5,5.2){*};
    \node at (30.5,5.2){*};
    \node at (31.5,5.2){*};
    \node at (32.5,5.2){*};
    \node at (33.5,5.2){*};
    \node at (31.5,4.2){*};
    \node at (32.5,4.2){*};
    \node at (33.5,4.2){*};
    \node at (31.5,3.2){*};
    \node at (32.5,3.2){*};
    \node at (33.5,3.2){*};
    \node at (31.5,2.2){*};
    \node at (32.5,2.2){*};
    \node at (33.5,2.2){*};
    \node at (31.5,1.2){*};
    \node at (32.5,1.2){*};
    \node at (33.5,1.2){*};
    \node at (34.5,1.2){*};
    \node at (35.5,1.2){*};
    \node at (31.5,.2){*};
    \node at (32.5,.2){*};
    \node at (33.5,.2){*};
    \node at (34.5,.2){*};
    \node at (35.5,.2){*};
\end{tikzpicture}$$
    \caption{Vector space decomposition of $\g$}
    \label{fig:vspacedecompC}
\end{figure}
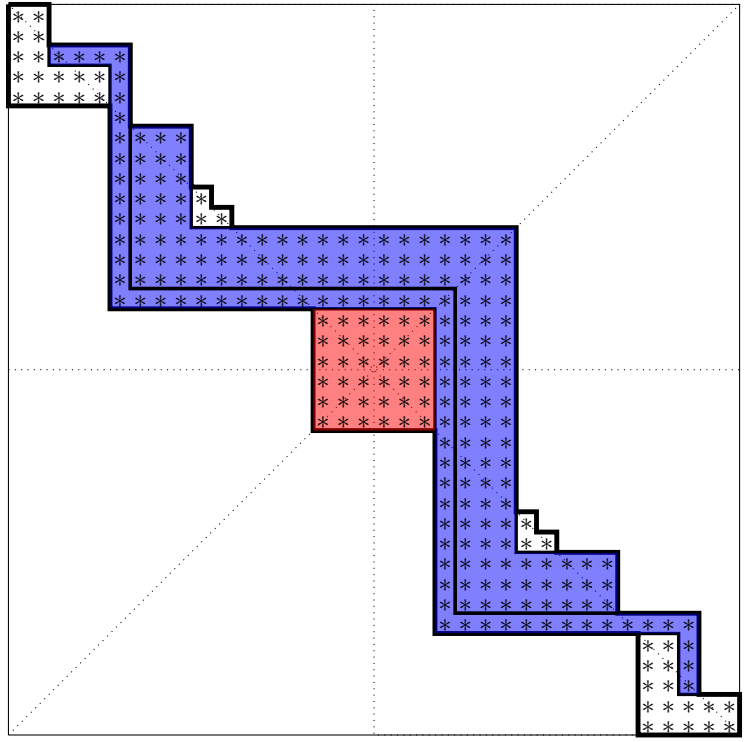


There are five terms in the vector space decomposition of $\g,$ each having its own core and peak sets. In order to explicitly describe the set products in each of the core and peak sets, we implicitly assign a labeling $v_1$ through $v_{36}$ -- increasing from left to right -- to the full meander in Figure~\ref{fig:fullmeander1}, which induces a corresponding labeling of $CM(\g)$. Then the core set of the component $\mathfrak{g}|_{\bn{3}}$ is the set of $3\times 3$ blocks along the diagonal of $\mathfrak{g}|_{\bn{3}};$ specifically, 

\vspace{-1.5em}

\begin{center}
\resizebox{\linewidth}{!}{$\textgoth{C}_{\bn{3}}=\Big\{\{7,8,9\}\times\{7,8,9\},\{12,13,14\}\times\{12,13,14\},\{23,24,25\}\times\{23,24,25\},\{28,29,30\}\times\{28,29,30\}\Big\}.$}
\end{center}
The peak set of the component $\mathfrak{g}|_{\bn{3}}$ is the set of $3\times 3$ blocks $$\textgoth{P}_{\bn{3}}=\Big\{\{12,13,14\}\times\{7,8,9\},\{12,13,14\}\times\{23,24,25\},\{28,29,30\}\times\{23,24,25\}\Big\}.$$ The elements of the core and peak of $\g$ are bolded and outlined, respectively, in Figure~\ref{fig:corepeakC}.

\begin{figure}[H]
$$\begin{tikzpicture}[scale=0.27]
    \def\Node{\node [circle, fill, inner sep=1.1pt]}
    \draw (0,0)--(36,0)--(36,36)--(0,36)--(0,0);
    \draw [line width=.65mm] (0,36)--(0,31)--(5,31)--(5,21)--(15,21)--(15,15)--(21,15)--(21,5)--(31,5)--(31,0)--(36,0)--(36,2)--(34,2)--(34,6)--(30,6)--(30,9)--(27,9)--(27,10)--(26,10)--(26,11)--(25,11)--(25,25)--(11,25)--(11,26)--(10,26)--(10,27)--(9,27)--(9,30)--(6,30)--(6,34)--(2,34)--(2,36)--(0,36);
    \draw [dotted] (0,0)--(36,36)--(0,36)--(36,0)--(18,0)--(18,36);
    \draw [dotted] (0,18)--(36,18);

    \draw [fill=blue, fill opacity=0.5] (2,34)--(6,34)--(6,30)--(9,30)--(9,25)--(25,25)--(25,9)--(30,9)--(30,6)--(34,6)--(34,2)--(33,2)--(33,5)--(21,5)--(21,21)--(5,21)--(5,33)--(2,33)--(2,34);
    \draw [fill=red, opacity=0.5] (15,21)--(21,21)--(21,15)--(15,15)--(15,21);
    
    \draw[line width=.45mm] (6,30)--(6,22)--(22,22)--(22,6)--(30,6);
    
    \draw (0,33)--(2,33)--(2,31);
    \draw (6,33)--(5,33)--(5,34);
    \draw (9,26)--(10,26)--(10,25);
    \draw (6,21)--(6,22)--(5,22);
    \draw (6,25)--(9,25)--(9,22);
    \draw (22,25)--(22,22)--(25,22);
    \draw (21,22)--(21,21)--(22,21);
    \draw (25,6)--(25,9)--(22,9);
    \draw (21,6)--(22,6)--(22,5);
    \draw (26,9)--(26,10)--(25,10);
    \draw (33,6)--(33,5)--(34,5);
    \draw (33,0)--(33,2)--(31,2);

    \node at (0.5,35.2){\large\bf*};
    \node at (1.5,35.2){\large\bf*};
    \node at (0.5,34.2){\large\bf*};
    \node at (1.5,34.2){\large\bf*};
    \node at (0.5,33.2){*};
    \node at (1.5,33.2){*};
    \node at (2.5,33.2){\large\bf*};
    \node at (3.5,33.2){*};
    \node at (4.5,33.2){*};
    \node at (5.5,33.2){*};
    \node at (0.5,32.2){*};
    \node at (1.5,32.2){*};
    \node at (2.5,32.2){*};
    \node at (3.5,32.2){\large\bf*};
    \node at (4.5,32.2){\large\bf*};
    \node at (5.5,32.2){*};
    \node at (0.5,31.2){*};
    \node at (1.5,31.2){*};
    \node at (2.5,31.2){*};
    \node at (3.5,31.2){\large\bf*};
    \node at (4.5,31.2){\large\bf*};
    \node at (5.5,31.2){*};
    \node at (5.5,30.2){\large\bf*};
    \node at (5.5,29.2){*};
    \node at (6.5,29.2){\large\bf*};
    \node at (7.5,29.2){\large\bf*};
    \node at (8.5,29.2){\large\bf*};
    \node at (5.5,28.2){*};
    \node at (6.5,28.2){\large\bf*};
    \node at (7.5,28.2){\large\bf*};
    \node at (8.5,28.2){\large\bf*};
    \node at (5.5,27.2){*};
    \node at (6.5,27.2){\large\bf*};
    \node at (7.5,27.2){\large\bf*};
    \node at (8.5,27.2){\large\bf*};
    \node at (5.5,26.2){*};
    \node at (6.5,26.2){*};
    \node at (7.5,26.2){*};
    \node at (8.5,26.2){*};
    \node at (9.5,26.2){\large\bf*};
    \node at (5.5,25.2){*};
    \node at (6.5,25.2){*};
    \node at (7.5,25.2){*};
    \node at (8.5,25.2){*};
    \node at (9.5,25.2){*};
    \node at (10.5,25.2){\large\bf*};
    \node at (5.5,24.2){*};
    \node at (6.5,24.2){*};
    \node at (7.5,24.2){*};
    \node at (8.5,24.2){*};
    \node at (9.5,24.2){*};
    \node at (10.5,24.2){*};
    \node at (11.5,24.2){\large\bf*};
    \node at (12.5,24.2){\large\bf*};
    \node at (13.5,24.2){\large\bf*};
    \node at (14.5,24.2){*};
    \node at (15.5,24.2){*};
    \node at (16.5,24.2){*};
    \node at (17.5,24.2){*};
    \node at (18.5,24.2){*};
    \node at (19.5,24.2){*};
    \node at (20.5,24.2){*};
    \node at (21.5,24.2){*};
    \node at (22.5,24.2){*};
    \node at (23.5,24.2){*};
    \node at (24.5,24.2){*};
    \node at (5.5,23.2){*};
    \node at (6.5,23.2){*};
    \node at (7.5,23.2){*};
    \node at (8.5,23.2){*};
    \node at (9.5,23.2){*};
    \node at (10.5,23.2){*};
    \node at (11.5,23.2){\large\bf*};
    \node at (12.5,23.2){\large\bf*};
    \node at (13.5,23.2){\large\bf*};
    \node at (14.5,23.2){*};
    \node at (15.5,23.2){*};
    \node at (16.5,23.2){*};
    \node at (17.5,23.2){*};
    \node at (18.5,23.2){*};
    \node at (19.5,23.2){*};
    \node at (20.5,23.2){*};
    \node at (21.5,23.2){*};
    \node at (22.5,23.2){*};
    \node at (23.5,23.2){*};
    \node at (24.5,23.2){*};
    \node at (5.5,22.2){*};
    \node at (6.5,22.2){*};
    \node at (7.5,22.2){*};
    \node at (8.5,22.2){*};
    \node at (9.5,22.2){*};
    \node at (10.5,22.2){*};
    \node at (11.5,22.2){\large\bf*};
    \node at (12.5,22.2){\large\bf*};
    \node at (13.5,22.2){\large\bf*};
    \node at (14.5,22.2){*};
    \node at (15.5,22.2){*};
    \node at (16.5,22.2){*};
    \node at (17.5,22.2){*};
    \node at (18.5,22.2){*};
    \node at (19.5,22.2){*};
    \node at (20.5,22.2){*};
    \node at (21.5,22.2){*};
    \node at (22.5,22.2){*};
    \node at (23.5,22.2){*};
    \node at (24.5,22.2){*};
    \node at (5.5,21.2){*};
    \node at (6.5,21.2){*};
    \node at (7.5,21.2){*};
    \node at (8.5,21.2){*};
    \node at (9.5,21.2){*};
    \node at (10.5,21.2){*};
    \node at (11.5,21.2){*};
    \node at (12.5,21.2){*};
    \node at (13.5,21.2){*};
    \node at (14.5,21.2){\large\bf*};
    \node at (15.5,21.2){*};
    \node at (16.5,21.2){*};
    \node at (17.5,21.2){*};
    \node at (18.5,21.2){*};
    \node at (19.5,21.2){*};
    \node at (20.5,21.2){*};
    \node at (21.5,21.2){*};
    \node at (22.5,21.2){*};
    \node at (23.5,21.2){*};
    \node at (24.5,21.2){*};
    \node at (15.5,20.2){\large\bf*};
    \node at (16.5,20.2){\large\bf*};
    \node at (17.5,20.2){\large\bf*};
    \node at (18.5,20.2){\large\bf*};
    \node at (19.5,20.2){\large\bf*};
    \node at (20.5,20.2){\large\bf*};
    \node at (21.5,20.2){*};
    \node at (22.5,20.2){*};
    \node at (23.5,20.2){*};
    \node at (24.5,20.2){*};
    \node at (15.5,19.2){\large\bf*};
    \node at (16.5,19.2){\large\bf*};
    \node at (17.5,19.2){\large\bf*};
    \node at (18.5,19.2){\large\bf*};
    \node at (19.5,19.2){\large\bf*};
    \node at (20.5,19.2){\large\bf*};
    \node at (21.5,19.2){*};
    \node at (22.5,19.2){*};
    \node at (23.5,19.2){*};
    \node at (24.5,19.2){*};
    \node at (15.5,18.2){\large\bf*};
    \node at (16.5,18.2){\large\bf*};
    \node at (17.5,18.2){\large\bf*};
    \node at (18.5,18.2){\large\bf*};
    \node at (19.5,18.2){\large\bf*};
    \node at (20.5,18.2){\large\bf*};
    \node at (21.5,18.2){*};
    \node at (22.5,18.2){*};
    \node at (23.5,18.2){*};
    \node at (24.5,18.2){*};
    \node at (15.5,17.2){\large\bf*};
    \node at (16.5,17.2){\large\bf*};
    \node at (17.5,17.2){\large\bf*};
    \node at (18.5,17.2){\large\bf*};
    \node at (19.5,17.2){\large\bf*};
    \node at (20.5,17.2){\large\bf*};
    \node at (21.5,17.2){*};
    \node at (22.5,17.2){*};
    \node at (23.5,17.2){*};
    \node at (24.5,17.2){*};
    \node at (15.5,16.2){\large\bf*};
    \node at (16.5,16.2){\large\bf*};
    \node at (17.5,16.2){\large\bf*};
    \node at (18.5,16.2){\large\bf*};
    \node at (19.5,16.2){\large\bf*};
    \node at (20.5,16.2){\large\bf*};
    \node at (21.5,16.2){*};
    \node at (22.5,16.2){*};
    \node at (23.5,16.2){*};
    \node at (24.5,16.2){*};
    \node at (15.5,15.2){\large\bf*};
    \node at (16.5,15.2){\large\bf*};
    \node at (17.5,15.2){\large\bf*};
    \node at (18.5,15.2){\large\bf*};
    \node at (19.5,15.2){\large\bf*};
    \node at (20.5,15.2){\large\bf*};
    \node at (21.5,15.2){*};
    \node at (22.5,15.2){*};
    \node at (23.5,15.2){*};
    \node at (24.5,15.2){*};
    \node at (21.5,14.2){\large\bf*};
    \node at (22.5,14.2){*};
    \node at (23.5,14.2){*};
    \node at (24.5,14.2){*};
    \node at (21.5,13.2){*};
    \node at (22.5,13.2){\large\bf*};
    \node at (23.5,13.2){\large\bf*};
    \node at (24.5,13.2){\large\bf*};
    \node at (21.5,12.2){*};
    \node at (22.5,12.2){\large\bf*};
    \node at (23.5,12.2){\large\bf*};
    \node at (24.5,12.2){\large\bf*};
    \node at (21.5,11.2){*};
    \node at (22.5,11.2){\large\bf*};
    \node at (23.5,11.2){\large\bf*};
    \node at (24.5,11.2){\large\bf*};
    \node at (21.5,10.2){*};
    \node at (22.5,10.2){*};
    \node at (23.5,10.2){*};
    \node at (24.5,10.2){*};
    \node at (25.5,10.2){\large\bf*};
    \node at (21.5,9.2){*};
    \node at (22.5,9.2){*};
    \node at (23.5,9.2){*};
    \node at (24.5,9.2){*};
    \node at (25.5,9.2){*};
    \node at (26.5,9.2){\large\bf*};
    \node at (21.5,8.2){*};
    \node at (22.5,8.2){*};
    \node at (23.5,8.2){*};
    \node at (24.5,8.2){*};
    \node at (25.5,8.2){*};
    \node at (26.5,8.2){*};
    \node at (27.5,8.2){\large\bf*};
    \node at (28.5,8.2){\large\bf*};
    \node at (29.5,8.2){\large\bf*};
    \node at (21.5,7.2){*};
    \node at (22.5,7.2){*};
    \node at (23.5,7.2){*};
    \node at (24.5,7.2){*};
    \node at (25.5,7.2){*};
    \node at (26.5,7.2){*};
    \node at (27.5,7.2){\large\bf*};
    \node at (28.5,7.2){\large\bf*};
    \node at (29.5,7.2){\large\bf*};
    \node at (21.5,6.2){*};
    \node at (22.5,6.2){*};
    \node at (23.5,6.2){*};
    \node at (24.5,6.2){*};
    \node at (25.5,6.2){*};
    \node at (26.5,6.2){*};
    \node at (27.5,6.2){\large\bf*};
    \node at (28.5,6.2){\large\bf*};
    \node at (29.5,6.2){\large\bf*};
    \node at (21.5,5.2){*};
    \node at (22.5,5.2){*};
    \node at (23.5,5.2){*};
    \node at (24.5,5.2){*};
    \node at (25.5,5.2){*};
    \node at (26.5,5.2){*};
    \node at (27.5,5.2){*};
    \node at (28.5,5.2){*};
    \node at (29.5,5.2){*};
    \node at (30.5,5.2){\large\bf*};
    \node at (31.5,5.2){*};
    \node at (32.5,5.2){*};
    \node at (33.5,5.2){*};
    \node at (31.5,4.2){\large\bf*};
    \node at (32.5,4.2){\large\bf*};
    \node at (33.5,4.2){*};
    \node at (31.5,3.2){\large\bf*};
    \node at (32.5,3.2){\large\bf*};
    \node at (33.5,3.2){*};
    \node at (31.5,2.2){*};
    \node at (32.5,2.2){*};
    \node at (33.5,2.2){\large\bf*};
    \node at (31.5,1.2){*};
    \node at (32.5,1.2){*};
    \node at (33.5,1.2){*};
    \node at (34.5,1.2){\large\bf*};
    \node at (35.5,1.2){\large\bf*};
    \node at (31.5,.2){*};
    \node at (32.5,.2){*};
    \node at (33.5,.2){*};
    \node at (34.5,.2){\large\bf*};
    \node at (35.5,.2){\large\bf*};
\end{tikzpicture}$$
    \caption{The core and peak identified within $\g$}
    \label{fig:corepeakC}
\end{figure}
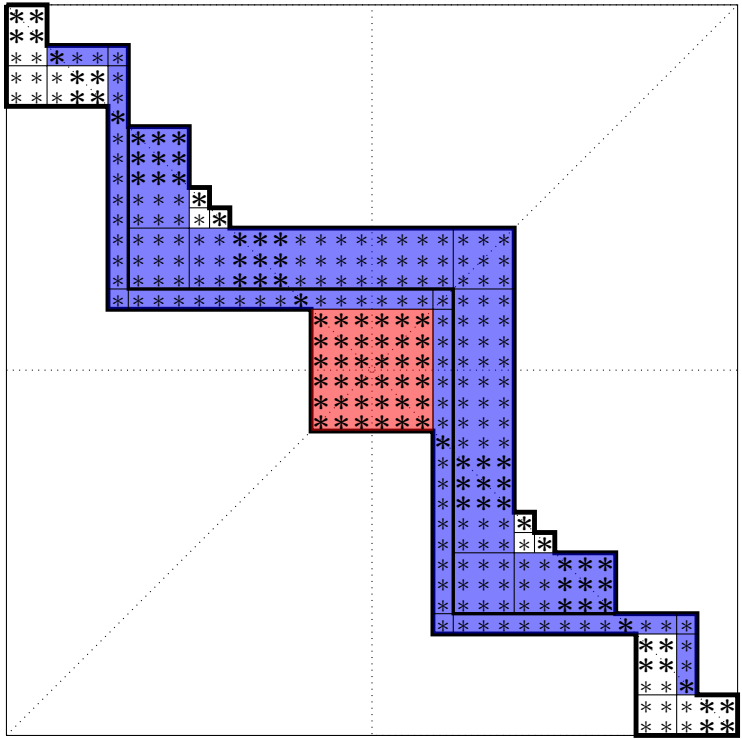

\end{Ex}

\subsection{Regular Forms}\label{sec:Cframework}
The core and peak of an index-one seaweed $\g$ assist in constructing a family $\Phi_C$ of regular one-forms on $\g$ that are dependent upon the homotopy type of $\g.$ Utilizing the symmetry conditions across the antidiagonal in a type-C seaweed, we can, without loss of generality, express an element of $\Phi_C$ as $\sum e_{i,j}^*$, where the sum is over a restricted set of admissible locations $(i,j)$ of $\g$ for which $i+j\leq 2n+1.$ The restricted set of admissible locations is visualized by placing a black dot in a location ($i,j$) if and only if $e_{i,j}^*$ is a summand of the regular one-form (see Example~\ref{ex:genCfunctional}). Since the restricted set of locations is determined by the homotopy type of $\mathfrak{g}$, we will explicitly construct a regular one-form $\varphi\in\Phi_C$ for each homotopy type listed in Corollary~\ref{cor:ind1htC}. First, we briefly describe the algorithm for defining $\varphi\in\Phi_C$ on a seaweed with arbitrary homotopy type and index.




Begin by identifying the tail and aftertail components in the seaweed. For tail components, form a triangle of black dots of height $\lfloor \frac{c_i}{2}\rfloor,$ where $c_i$ is the size of the tail component, in the upper left-hand corner of each core block and in all entries on the diagonal of each peak block, so long as these locations $(i,j)$ satisfy $i+j\leq 2n+1.$ For the aftertail component, form a triangle of black dots of height $\frac{c_i}{2},$ where $c_i$ is the size of the aftertail component, in the upper left-hand corner of the (unique) core block. Note there are no peak blocks in an aftertail component. The only remaining components are those which are neither tail nor aftertail components. For components of this form, black dots are placed on and above the antidiagonal of each core block and on the diagonal of each peak block. Example~\ref{ex:genCfunctional} illustrates this general construction. 

\begin{Ex}\label{ex:genCfunctional}
The regular one-form $\varphi$ can be visualized in the seaweed via the placement of black dots, as seen in Figure~\ref{fig:genCfunctional}.

\begin{figure}[H]
$$\begin{tikzpicture}[scale=0.2]
    \def\Node{\node [circle, fill, inner sep=1.1pt]}
    \draw (0,0)--(36,0)--(36,36)--(0,36)--(0,0);
    \draw [line width=.65mm] (0,36)--(0,31)--(5,31)--(5,21)--(15,21)--(15,15)--(21,15)--(21,5)--(31,5)--(31,0)--(36,0)--(36,2)--(34,2)--(34,6)--(30,6)--(30,9)--(27,9)--(27,10)--(26,10)--(26,11)--(25,11)--(25,25)--(11,25)--(11,26)--(10,26)--(10,27)--(9,27)--(9,30)--(6,30)--(6,34)--(2,34)--(2,36)--(0,36);
    \draw [dotted] (0,0)--(36,36)--(0,36)--(36,0)--(18,0)--(18,36);
    \draw [dotted] (0,18)--(36,18);

    \draw [fill=blue, fill opacity=0.5] (2,34)--(6,34)--(6,30)--(9,30)--(9,25)--(25,25)--(25,9)--(30,9)--(30,6)--(34,6)--(34,2)--(33,2)--(33,5)--(21,5)--(21,21)--(5,21)--(5,33)--(2,33)--(2,34);
    \draw [fill=red, opacity=0.5] (15,21)--(21,21)--(21,15)--(15,15)--(15,21);
    
    \draw[line width=.45mm] (6,30)--(6,22)--(22,22)--(22,6)--(30,6);
    
    \draw (0,33)--(2,33)--(2,31);
    \draw (6,33)--(5,33)--(5,34);
    \draw (9,26)--(10,26)--(10,25);
    \draw (6,21)--(6,22)--(5,22);
    \draw (6,25)--(9,25)--(9,22);
    \draw (22,25)--(22,22)--(25,22);
    \draw (21,22)--(21,21)--(22,21);
    \draw (25,6)--(25,9)--(22,9);
    \draw (21,6)--(22,6)--(22,5);
    \draw (26,9)--(26,10)--(25,10);
    \draw (33,6)--(33,5)--(34,5);
    \draw (33,0)--(33,2)--(31,2);
    
    \Node at (0.5,35.5){};
    \Node at (0.5,34.5){};
    \Node at (0.5,32.5){};
    \Node at (1.5,35.5){};
    \Node at (1.5,31.5){};
    \Node at (3.5,31.5){};
    \Node at (4.5,32.5){};
    \Node at (3.5,32.5){};
    \Node at (5.5,33.5){};
    \Node at (5.5,21.5){};
    \Node at (6.5,29.5){};
    \Node at (6.5,24.5){};
    \Node at (7.5,23.5){};
    \Node at (8.5,22.5){};
    \Node at (9.5,25.5){};
    \Node at (11.5,24.5){};
    \Node at (15.5,20.5){};
    \Node at (15.5,19.5){};
    \Node at (15.5,18.5){};
    \Node at (16.5,20.5){};
    \Node at (16.5,19.5){};
    \Node at (17.5,20.5){};
    \Node at (21.5,21.5){};
    \Node at (22.5,24.5){};
    \Node at (23.5,23.5){};
\end{tikzpicture}$$
\caption{Summands of $\varphi$ given as black dots in the seaweed $\mathfrak{p}_{18}^C\frac{5|10}{2|4|3|1|1}$}
\label{fig:genCfunctional}
\end{figure}
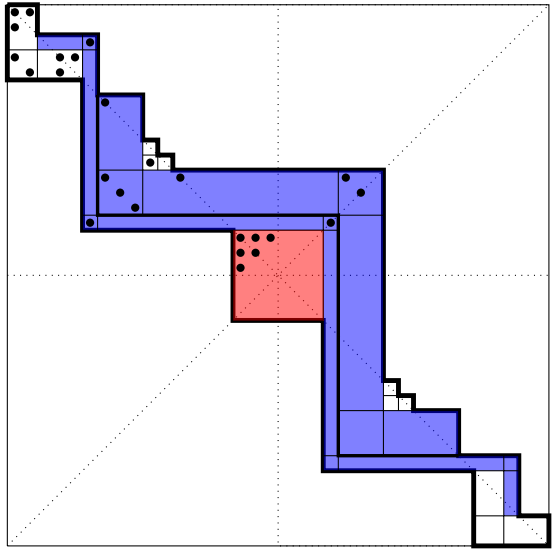
The one-form $\varphi$ is then given by \begin{align*}
\varphi=& e_{1,1}^*+e_{1,2}^*+e_{2,1}^*+e_{3,6}^*+e_{4,1}^*+e_{4,4}^*+e_{4,5}^*+e_{5,2}^*+e_{5,4}^*+e_{7,7}^*+e_{11,10}^*+e_{12,7}^*+e_{12,12}^*+e_{12,23}^* \\ &+e_{13,8}^*+e_{13,24}^*+e_{14,9}^*+e_{15,6}^*+e_{15,22}^*+e_{16,16}^*+e_{16,17}^*+e_{16,18}^*+e_{17,16}^*+e_{17,17}^*+e_{18,16}^*.
\end{align*}

\end{Ex}

The description and construction of elements of $\Phi_C$ are simplified upon restriction to index-one seaweeds. We proceed by treating each of the homotopy types in Corollary~\ref{cor:ind1htC} as its own case.


\subsubsection{$\mathcal{H}_C({\color{blue}1,\dots,1,c,1,\dots,1})$ for $\bn{c}=\bn{2}$ or $\bn{3}$}\label{sec:h3}
Let $\g=\mathfrak{p}_{2n}^C\frac{a_1|\dots|a_m}{b_1|\dots|b_t}$ have homotopy type $\mathcal{H}_C({\color{blue}1,\dots,1,c,1,\dots,1}),$ where $\bn{c}=\bn{2}$ or $\bn{3}.$ The core blocks and peak blocks differ in dimension depending on the value of $\bn{c};$ however, the construction and proofs here considered hold whether $\bn{c}=\bn{2}$ or $\bn{3}.$ Denote by $\varphi_{(\bn{c})}$ the regular one-form on $\g$ constructed as in Section~\ref{sec:Cframework}. See Example~\ref{ex:h2cfunctional}.

\begin{Ex}\label{ex:h2cfunctional}
Consider the seaweed $\g=\mathfrak{p}_{14}^C\frac{7}{1|2},$ which has homotopy type $\mathcal{H}_C({\color{blue}1,2,1}).$ The regular one-form $\varphi_{(\bn{2})}$ has summands determined by the locations of black dots in Figure~\ref{fig:h2C}.
\begin{figure}[H]
$$\begin{tikzpicture}[scale=0.5]
    \def\Node{\node [circle, fill, inner sep=1.5pt]}
    \draw (0,0)--(14,0)--(14,14)--(0,14)--(0,0);
    \draw [line width=.65mm, fill=blue, opacity=0.5] (0,14)--(0,7)--(7,7)--(7,0)--(14,0)--(14,1)--(13,1)--(13,3)--(11,3)--(11,11)--(3,11)--(3,13)--(1,13)--(1,14)--(0,14);
    \draw [dotted] (0,0)--(14,14)--(0,14)--(14,0)--(7,0)--(7,14);
    \draw [dotted] (0,7)--(14,7);
    
    \draw (0,13)--(1,13)--(1,8)--(8,8)--(8,1)--(13,1)--(13,0);
    \draw (6,7)--(6,8)--(7,8)--(7,7)--(8,7)--(8,6)--(7,6);
    
    \draw (3,11)--(3,10)--(10,10)--(10,3)--(11,3);
    \draw (4,11)--(4,10);
    \draw (11,4)--(10,4);
    
    \draw (1,11)--(3,11);
    \draw (4,10)--(4,8);
    \draw (6,10)--(6,8);
    \draw (8,6)--(10,6);
    \draw (8,4)--(10,4);
    \draw (11,3)--(11,1);
    
    \Node at (0.5,7.5){};
    \Node at (7.5,7.5){};
    \Node at (1.5,12.5){};
    \Node at (1.5,9.5){};
    \Node at (2.5,8.5){};
    \Node at (4.5,9.5){};
    \Node at (8.5,9.5){};
    \Node at (10.5,10.5){};
\end{tikzpicture}$$
    \caption{The summands of $\varphi_{(\bn{2})}$ identified within $\g$}
    \label{fig:h2C}
\end{figure}

\noindent
As Figure~\ref{fig:h2C} displays, $\varphi_{(\bn{2})}$ is given by $$\varphi_{(\bn{2})}=e_{2,2}^*+e_{4,11}^*+e_{5,2}^*+e_{5,5}^*+e_{5,9}^*+e_{6,3}^*+e_{7,1}^*+e_{7,8}^*.$$
\end{Ex}

For seaweeds with homotopy type $\mathcal{H}_C(\bn{1,\dots,1,c,1,\dots,1}),$ where $\bn{c}=\bn{2}$ or $\bn{3},$ the full meander $M_{2n}^C(\mathfrak{g})$ consists of paths, each with one vertex in the tail, and exactly one cycle. Such a cycle, along with the following technical lemma, will allow for the identification of the generator of $\ker(B_{\varphi_{(\bn{c})}})$ in Theorem~\ref{thm:h2C} below.

\begin{lemma}\label{lem:oppsignC}
Let $\g\subset\mathfrak{sp}(2n)$ be a type-C seaweed with homotopy type consisting of at least one component of size $\bn{c}=\bn{2}$ or $\bn{3}$. Then let $\mathscr{C}=(v_{i_1},v_{i_2},\dots,v_{i_k},v_{i_1})$ be a cycle in the full meander $M_{2n}^C(\g)$ corresponding to a component of size $\bn{c}$, and let $h_{(\bn{c})}=\sum_{j=1}^{k}(-1)^{j+1}e_{i_j,i_j}.$ If $v_{i_r},v_{i_{\ell}}\in \mathscr{C}$ and $r,\ell\in I,$ where $v_I$ is a vertex in the component meander $CM(\mathfrak{g}),$ then $e_{i_r,i_r}$ and $e_{i_{\ell},i_{\ell}}$ have coefficients with opposite sign in $h_{(\bn{c})}.$
\end{lemma}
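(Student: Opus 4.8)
The claim is a statement about how the vertices of a single cycle $\mathscr{C}$ in the full meander $M_{2n}^C(\g)$ get distributed among the vertices of the component meander $CM(\g)$, together with the parity bookkeeping of the alternating sum $h_{(\bn{c})}$. The plan is to track what the winding-down moves do to the cycle $\mathscr{C}$ as we pass from $M_{2n}^C(\g)$ to $CM(\g)$. Recall that $CM(\g)$ is obtained from $M_{2n}^C(\g)$ by applying the same signature, except that Component Deletion moves use parameter $c=1$; since $\bn{c}=\bn{2}$ or $\bn{3}$ and the homotopy type has exactly one non-unit blue entry, the component corresponding to $\mathscr{C}$ survives the winding down and becomes, in $CM(\g)$, a single edge (if $\bn{c}=\bn{2}$) or a path on three vertices (if $\bn{c}=\bn{3}$). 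A vertex $v_I$ of $CM(\g)$ with $I = \{i_{r_1} < i_{r_2} < \cdots\}$ is, by construction of the winding moves, exactly the set of vertices of the original meander that get ``merged'' into one point as the winding-down proceeds; the content of the lemma is that consecutive merged vertices always occupy positions of opposite parity along the cycle.

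The key step is to understand the merging rule. Each winding move (Pure Contraction, Block Elimination, Rotation Contraction, Flip) acts on the first blocks of the two compositions and, graphically, identifies certain vertices of the current meander by ``folding'' a block onto its image under reflection; two vertices get identified precisely when they are the two endpoints of an edge drawn in that first block at that stage. I would prove by induction on the length of the signature that if $v_I$ is a vertex of $CM(\g)$ and $i_r, i_\ell \in I$ with $i_r, i_\ell$ both lying on the cycle $\mathscr{C}$, then the number of cycle-edges of $\mathscr{C}$ between $v_{i_r}$ and $v_{i_\ell}$ (in either direction) is odd. The base case is the empty signature, where each $v_I$ is a singleton and there is nothing to check. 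For the inductive step, observe that a single winding move identifies a vertex $v_p$ with a vertex $v_q$ exactly when $(v_p, v_q)$ is an edge of the meander in the relevant block --- i.e. they are adjacent on some component, so in particular if both lie on $\mathscr{C}$ they are joined by a single cycle-edge, which is an odd count. Merging along such edges and invoking the inductive hypothesis, any two cycle-vertices that end up in the same $I$ are separated by an alternating-parity-preserving sequence of identifications, so the parity of their separation along $\mathscr{C}$ is odd. Since $h_{(\bn{c})} = \sum_{j=1}^k (-1)^{j+1} e_{i_j,i_j}$ assigns to $e_{i_j,i_j}$ the sign $(-1)^{j+1}$, two cycle-positions at odd distance receive opposite signs, which is exactly the assertion.

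I expect the main obstacle to be making precise the bookkeeping of which original vertices get identified under each of the five winding moves, and confirming that in each case the identification is always along a single meander edge (equivalently, that no move ever directly glues two vertices at even cycle-distance). This requires a careful case analysis of the moves in Lemma~\ref{lem:windingC}, tracking how the arcs in the first top block and first bottom block reflect and concatenate --- the Rotation Contraction and Flip moves are the fiddly ones, since they swap the roles of top and bottom and can chain several arcs together. A clean way to organize this is to note that each winding move is a composition of elementary ``fold'' operations, each folding a block of the meander across a vertical line and thereby identifying $v_p$ with its mirror image, which is its neighbor along whichever arc passes through that block; arcs always connect vertices at odd positions along any path or cycle they belong to, because a path visits vertices in alternating top/bottom fashion. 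Once this elementary-fold description is established, the parity claim follows by the straightforward induction sketched above, and the translation to the sign pattern of $h_{(\bn{c})}$ is immediate.
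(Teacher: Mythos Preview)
Your inductive approach via the winding signature is genuinely different from the paper's, which is much shorter and avoids any case analysis. The paper simply fixes one ``innermost'' edge $\{v_i,v_{i+\bn{c}-1}\}$ of the component in $M_{2n}^C(\g)$ (such an edge exists precisely because the component has size $\bn{c}$) and observes that if $i_r,i_\ell$ lie in the same index set $I$, then the distance from $v_{i_r}$ to $v_i$ equals the distance from $v_{i_\ell}$ to $v_{i+\bn{c}-1}$; hence the path from $v_{i_r}$ to $v_{i_\ell}$ through the edge $\{v_i,v_{i+\bn{c}-1}\}$ has odd length, and the sign claim follows in one line. No induction on the signature, no examination of the five move types.

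Your plan could plausibly be carried through, but there is a conceptual wrinkle worth flagging: $CM(\g)$ is not obtained from $M_{2n}^C(\g)$ by merging vertices during winding-down. By definition it is a \emph{separate} meander, built with the same signature but with every $C(c)$ replaced by $C(1)$; the winding-down moves themselves delete vertices rather than identify them. So your inductive invariant (``merged vertices are at odd cycle-distance'') is not literally a statement about the winding process, and to make it precise you would first have to set up the $\bn{c}$-to-$1$ correspondence between vertices of the cycle in $M_{2n}^C(\g)$ and vertices of its image in $CM(\g)$. Once that ``parallel strands'' picture is made explicit, the paper's direct distance argument is already available and the induction becomes superfluous. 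What the paper's route buys is brevity; what yours would buy, if completed, is a proof that does not single out a particular edge, but that extra generality is not needed for the application in Theorem~\ref{thm:h2C}.
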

\begin{proof}
Since $\g$ has a component of size $\bn{c}$, there exist vertices $v_i$ and $v_{i+\bn{c}-1}$ in $M_{2n}^C(\g)$ such that $\{v_i,v_{i+\bn{c}-1}\}$ is an edge in $M_{2n}^C(\g).$ Fix such vertices $v_i$ and $v_{i+\bn{c}-1}.$ Considering $M_{2n}^C(\g),$ since $v_{i_r}$ and $v_{i_{\ell}}$ are identified in the construction of $CM(\mathfrak{g})$ by assumption, it follows that the length of the shortest path from $v_{i_r}$ to $v_i$ is equal to the length of the shortest path from $v_{i_{\ell}}$ to $v_{i+\bn{c}-1}.$ Therefore, the length of the path from $v_{i_r}$ to $v_{i_{\ell}}$ is odd, so $e_{i_r,i_r}$ and $e_{i_{\ell},i_{\ell}}$ have coefficients with opposite sign in $h_{(\bn{c})}.$
\end{proof}

\begin{theorem}\label{thm:h2C}
Let $\mathfrak{g}\subset\mathfrak{sp}(2n)$ be a type-C seaweed with homotopy type $\mathcal{H}_C(\bn{1,\dots,1,c,1,\dots,1}),$ where $\bn{c}=\bn{2}$ or $\bn{3},$ and let $\mathscr{C}=(v_{i_1},v_{i_2},\dots,v_{i_k},v_{i_1})$ be the unique cycle in the full meander $M_{2n}^C(\g).$ If $\varphi_{(\bn{c})}$ is defined as in Section~\ref{sec:Cframework}, then $$\ker(B_{\varphi_{(\bn{c})}})=\text{span}\{h_{(\bn{c})}\},$$ where $\bn{c}=\bn{2}$ or $\bn{3},$ and $$h_{(\bn{c})}=\sum_{j=1}^k(-1)^{j+1}e_{i_j,i_j}.$$
\end{theorem}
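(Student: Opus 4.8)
The plan is to show two things: first that $h_{(\bn{c})}$ actually lies in $\ker(B_{\varphi_{(\bn{c})}})$, and second that the kernel is at most one-dimensional, which by Corollary~\ref{cor:ind1htC} (the relevant homotopy type has index one) and Theorem~\ref{thm:indexC} is forced since $\varphi_{(\bn{c})}$ is a regular (index-realizing) one-form and $\ind\g=1$. In fact the second point is essentially free: since $\varphi_{(\bn{c})}$ is regular, $\dim\ker(B_{\varphi_{(\bn{c})}})=\ind\g=1$, so the entire content is to exhibit one nonzero element of the kernel. Thus I would state up front that it suffices to prove $h_{(\bn{c})}\neq 0$ and $B_{\varphi_{(\bn{c})}}(h_{(\bn{c})},y)=0$ for all $y\in\g$.

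Nonvanishing of $h_{(\bn{c})}$ is immediate from its definition as an alternating sum of distinct diagonal matrix units $e_{i_j,i_j}$ along the cycle $\mathscr{C}$ — though one should note $h_{(\bn{c})}$ is to be read as an element of $\mathfrak{sp}(2n)$ in the Chevalley basis of Remark~\ref{rem:Cchev}, i.e. $e_{i_j,i_j}$ abbreviates $e_{i_j,i_j}-e_{2n-i_j+1,2n-i_j+1}$ when $i_j\le n$; I would remark that the cycle $\mathscr{C}$ in the full meander is symmetric under the mirror involution $i\mapsto 2n-i+1$, so $h_{(\bn{c})}$ is genuinely well-defined as a symplectic element and is nonzero. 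Then I would compute $B_{\varphi_{(\bn{c})}}(h_{(\bn{c})},y)=\varphi_{(\bn{c})}([h_{(\bn{c})},y])$ on basis elements $y$. Since $h_{(\bn{c})}$ is diagonal, $[h_{(\bn{c})},e_{p,q}]=(\lambda_p-\lambda_q)e_{p,q}$ where $\lambda_p$ is the coefficient of $e_{p,p}$ in $h_{(\bn{c})}$ (zero if $v_p\notin\mathscr{C}$). So $B_{\varphi_{(\bn{c})}}(h_{(\bn{c})},y)$ is a sum, over the summands $e_{p,q}^*$ of $\varphi_{(\bn{c})}$ with $(p,q)$ matching a term of $y$, of $(\lambda_p-\lambda_q)$ times the coefficient. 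The claim reduces to: for every admissible location $(p,q)$ at which $\varphi_{(\bn{c})}$ has a dot, $\lambda_p=\lambda_q$.

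The heart of the argument — and the step I expect to be the main obstacle — is this verification that $\varphi_{(\bn{c})}$ is "constant on $\mathscr{C}$-related coordinates" at every one of its dotted locations. I would organize it by the three types of locations where $\varphi_{(\bn{c})}$ places dots, according to the construction in Section~\ref{sec:Cframework}: (i) dots inside a core block $I\times I$, where $(p,q)=(i_r,i_\ell)$ with $v_{i_r},v_{i_\ell}$ both carried by the same component-meander vertex $v_I$; here if $v_I$ is the special tail vertex sitting on the cycle, Lemma~\ref{lem:oppsignC} gives $\lambda_{i_r}=-\lambda_{i_\ell}$ — wait, that is the opposite of what is wanted, so I need to check that the dots inside core blocks of the cyclic component are placed only at locations where $\lambda_p-\lambda_q$ still annihilates the relevant bracket, i.e. where either $p$ or $q$ is not a cycle vertex, OR use that the core-block triangle for a tail component of size $\bn{c}=\bn 2,\bn 3$ has height $\lfloor \bn{c}/2\rfloor=1$, so it is a single dot in the upper-left corner $(i,i)$ where $p=q$ and $\lambda_p-\lambda_q=0$ trivially; (ii) dots on the diagonal of a peak block, again $p=q$ so the contribution vanishes; (iii) dots on or above the antidiagonal of core blocks and on diagonals of peak blocks of the non-tail, non-aftertail components — but in the homotopy type $\mathcal{H}_C(\bn{1,\dots,1,c,1,\dots,1})$ every component is a tail component (size-1 tail components plus the one size-$\bn c$ tail component), so case (iii) does not arise; likewise there is no aftertail. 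So the real work is a careful case check that, at the finitely many dot-locations of $\varphi_{(\bn{c})}$, the bracket with $h_{(\bn{c})}$ pairs trivially — combined with showing the dots on core blocks of the cyclic component are exactly the corner dots $p=q$ and the dots on peak blocks are diagonal. I would close by invoking that $\dim\ker B_{\varphi_{(\bn{c})}}=1$ (regularity plus Corollary~\ref{cor:ind1htC}) to upgrade "$h_{(\bn{c})}\in\ker$" to "$\ker=\operatorname{span}\{h_{(\bn{c})}\}$", which completes the proof.
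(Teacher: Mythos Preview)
Your overall strategy is the same as the paper's: regularity plus $\ind\g=1$ reduces everything to checking $h_{(\bn{c})}\in\ker(B_{\varphi_{(\bn{c})}})$, and then one verifies $\lambda_p=\lambda_q$ at each dotted location $(p,q)$, where $\lambda_p$ is the coefficient of $e_{p,p}$ in $h_{(\bn{c})}$. Your treatment of core blocks (case (i)) is fine once you observe that the tail-component triangle has height $\lfloor\bn{c}/2\rfloor=1$, so the dot is the single corner $(i,i)$.

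The gap is in case (ii). You write ``dots on the diagonal of a peak block, again $p=q$ so the contribution vanishes,'' but this is false. Peak blocks are \emph{off-diagonal} blocks $I\times J$ with $I\neq J$; the ``diagonal'' of such a block means the diagonal of the $\bn{c}\times\bn{c}$ sub-block, not the matrix diagonal. Concretely, for $\bn{c}=\bn{2}$ a peak block has rows $\{s,s+1\}$ and columns $\{t,t+1\}$ with $s\neq t$, and the dotted locations are $(s,t)$ and $(s+1,t+1)$. (In Example~\ref{ex:h2cfunctional}, the summands $e_{5,2}^*$, $e_{6,3}^*$, $e_{5,9}^*$ are exactly of this type.) So you must actually argue that $\lambda_s=\lambda_t$ and $\lambda_{s+1}=\lambda_{t+1}$. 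For the size-$\bn{1}$ tail components this is easy: neither $p$ nor $q$ lies on the cycle $\mathscr{C}$, so both $\lambda$'s are zero. But for the size-$\bn{c}$ component the locations $(s,t)$ and $(s+1,t+1)$ have $p,q$ on the cycle, and here you need precisely the content of Lemma~\ref{lem:oppsignC}: the antidiagonal entries $(s,t+1)$ and $(s+1,t)$ correspond to meander edges, so $\lambda_s=-\lambda_{t+1}$ and $\lambda_{s+1}=-\lambda_t$; and since $t,t+1$ belong to the same component-meander vertex, Lemma~\ref{lem:oppsignC} gives $\lambda_t=-\lambda_{t+1}$, whence $\lambda_s=\lambda_t$ and $\lambda_{s+1}=\lambda_{t+1}$. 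This is the step the paper labels \textbf{(d)} (and its analogue \textbf{(e)} for antidiagonal locations), and it is the actual substance of the proof; your proposal skips it by conflating ``diagonal of a peak block'' with ``matrix diagonal.''
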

\begin{proof}
Since $\varphi_{(\bn{c})}$ is regular, we need only show that $h_{(\bn{c})}\in\ker(B_{\varphi_{(\bn{c})}}),$ for $\bn{c}=\bn{2}$ or $\bn{3}.$ Note that by the definition of the full meander $M_{2n}^C(\g),$ the length of any path from $v_{i_j}$ to $v_{2n-i_j+1}$ is odd, so the coefficients of $e_{i_j,i_j}$ and $e_{2n-i_j+1,2n-i_j+1}$ in the expansion of $h_{(\bn{c})}$ have opposite sign, so $h_{(\bn{c})}\in\g$ for $\bn{c}=\bn{2}$ or $\bn{3}.$ Now, to see that $h_{(\bn{c})}\in\ker(B_{\varphi_{(\bn{c})}}),$ consider the following:
\begin{enumerate}
    \item[\textbf{(a)}] $\varphi_{(\bn{c})}([h_{(\bn{c})},e_{i,i}-e_{2n-i+1,2n-i+1}])=0,$ for all $1\leq i\leq n,$
    \item[\textbf{(b)}] $\varphi_{(\bn{c})}([h_{(\bn{c})},e_{i,j}\pm e_{2n-j+1,2n-i+1}])=0,$ for all $i,j$ such that $i+j<2n+1$ and $e_{i,j}^*$ is not a summand of $\varphi_{(\bn{c})},$
    \item[\textbf{(c)}] $\varphi_{(\bn{c})}([h_{(\bn{c})},e_{i,j}])=0,$ for all $i,j$ such that $i+j=2n+1$ and $e_{i,j}^*$ is not a summand of $\varphi_{(\bn{c})},$
    \item[\textbf{(d)}] $\varphi_{(\bn{c})}([h_{(\bn{c})},e_{i,j}\pm e_{2n-j+1,2n-i+1}])=\varphi_{(\bn{c})}\left(e_{i,j}\pm e_{2n-j+1,2n-i+1}-(e_{i,j}\pm e_{2n-j+1,2n-i+1})\right)=0,$ for all $i,j$ such that $i+j<2n+1$ and $e_{i,j}^*$ is a summand of $\varphi_{(\bn{c})},$ and
    \item[\textbf{(e)}] $\varphi_{(\bn{c})}([h_{(\bn{c})},e_{i,j}])=\varphi_{(\bn{c})}(e_{i,j}-e_{i,j})=0,$ for all $i,j$ such that $i+j=2n+1$ and $e_{i,j}^*$ is a summand of $\varphi_{(\bn{c})}.$
\end{enumerate}
Equations~\textbf{(a)-(c)} follow immediately from $h_{(\bn{c})}$ being a semisimple element of $\g,$ and Equation~\textbf{(d)} follows from Lemma~\ref{lem:oppsignC} in the following way. 

Without loss of generality, let $\bn{c}=\bn{2}.$ In $\g|_{\bn{2}},$ each peak block consists of exactly four entries. Fix such a peak block and call the entries -- from left to right, top to bottom -- $(s,t),(s,t+1),(s+1,t),$ and $(s+1,t+1).$ The entries on the antidiagonal of the peak block, $(s,t+1)$ and $(s+1,t),$ correspond to the edges $(v_s,v_{t+1})$ and $(v_{s+1},v_t)$ of the oriented full meander $\overrightarrow{M}_{2n}^C(\g).$ Therefore, $e_{s,s}$ and $e_{t+1,t+1}$ have coefficients with opposite sign in the expansion of $h_{(\bn{2})},$ and similarly for the summands $e_{s+1,s+1}$ and $e_{t,t}.$ Further, the vertices $v_t$ and $v_{t+1}$ of $M_{2n}^C(\g)$ are identified in the construction of the component meander $CM(\mathfrak{g}),$ so Lemma~\ref{lem:oppsignC} implies that $e_{t,t}$ and $e_{t+1,t+1}$ have coefficients with opposite sign in the expansion of $h_{(\bn{2})}.$ Therefore, the coefficients of $e_{s,s}$ and $e_{t,t}$ in the expansion of $h_{(\bn{2})}$ have the same sign, and similarly for $e_{s+1,s+1}$ and $e_{t+1,t+1}.$ We then conclude that Equation \textbf{(d)} holds since $e_{s,t}^*$ and $e_{s+1,t+1}^*$ are the summands of $\varphi_{(\bn{2})}$ arising from the chosen peak block. Equation \textbf{(e)} follows via similar reasoning.
\end{proof}

\begin{Ex}\label{ex:h2ckernel}
Returning to the seaweed $\mathfrak{g}=\mathfrak{p}_{14}^C\frac{7}{1|2}$ from Example~\ref{ex:h2cfunctional}, note that the cycle in the full meander $M_{2n}^C(\mathfrak{g})$ \textup(see Figure~\ref{fig:h2cmeander}\textup) can be written as $$(v_2,v_6,v_9,v_{13},v_{12},v_{10},v_5,v_3,v_2),$$ so we have that $$\ker(B_{\varphi_{(\bn{2})}})=\text{span}\{e_{2,2}-e_{6,6}+e_{9,9}-e_{13,13}+e_{12,12}-e_{10,10}+e_{5,5}-e_{3,3}\}.$$

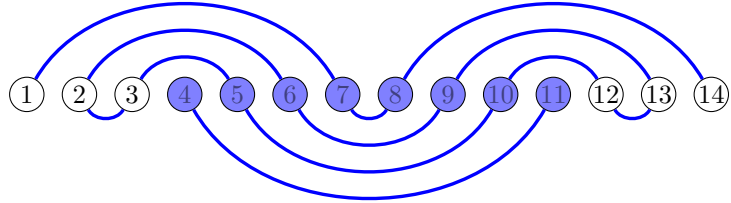
\begin{figure}[H]
$$\begin{tikzpicture}[scale=0.7]
    \def\Node{\node [circle, fill, inner sep=1.5pt]}
    \vertex (1) at (1,0){1};
    \vertex (2) at (2,0){2};
    \vertex (3) at (3,0){3};
    \vertex[fill=blue, fill opacity=0.5, text opacity=1] (4) at (4,0){4};
    \vertex[fill=blue, fill opacity=0.5, text opacity=1] (5) at (5,0){5};
    \vertex[fill=blue, fill opacity=0.5, text opacity=1] (6) at (6,0){6};
    \vertex[fill=blue, fill opacity=0.5, text opacity=1] (7) at (7,0){7};
   
    \vertex[fill=blue, fill opacity=0.5, text opacity=1] (700) at (8,0){8};
    \vertex[fill=blue, fill opacity=0.5, text opacity=1] (600) at (9,0){9};
    \vertex[fill=blue, fill opacity=0.5, text opacity=1] (500) at (10,0){10};
    \vertex[fill=blue, fill opacity=0.5, text opacity=1] (400) at (11,0){11};
    \vertex (300) at (12,0){12};
    \vertex (200) at (13,0){13};
    \vertex (100) at (14,0){14};
    \draw [line width=0.45mm, blue] (1) to[bend left=60] (7);
    \draw [line width=0.45mm, blue] (2) to[bend left=60] (6);
    \draw [line width=0.45mm, blue] (3) to[bend left=60] (5);
    
    \draw [line width=0.45mm, blue] (2) to[bend right=60] (3);
    \draw [line width=0.45mm, blue] (4) to[bend right=60] (400);
    \draw [line width=0.45mm, blue] (5) to[bend right=60] (500);
    \draw [line width=0.45mm, blue] (6) to[bend right=60] (600);
    \draw [line width=0.45mm, blue] (7) to[bend right=60] (700);

    \draw [line width=0.45mm, blue] (100) to[bend right=60] (700);
    \draw [line width=0.45mm, blue] (200) to[bend right=60] (600);
    \draw [line width=0.45mm, blue] (300) to[bend right=60] (500);
    \draw [line width=0.45mm, blue] (200) to[bend left=60] (300);

\end{tikzpicture}$$
\caption{The full meander of $\mathfrak{p}_{14}^C\frac{7}{1|2}$}
\label{fig:h2cmeander}
\end{figure}
\end{Ex}

\subsubsection{$\mathcal{H}_C({\color{blue}1,\dots,1},{\color{red}2})$}\label{sec:h2}
Let $\g=\mathfrak{p}_{2n}^C\frac{a_1|\dots|a_m}{b_1|\dots|b_t}$ have homotopy type $\mathcal{H}_C(\bn{1,\dots,1},\rn{2}),$ and denote by $\varphi_{(\rn{2})}$ the regular one-form on $\g$ constructed as in Section~\ref{sec:Cframework}. See Example~\ref{ex:aftertailfunctional}.

\begin{Ex}\label{ex:aftertailfunctional}
Consider the seaweed $\g=\mathfrak{p}_{16}^C\frac{2|3}{1|6},$ which has homotopy type $\mathcal{H}_C(\bn{1,1},\rn{2}).$ The regular one-form $\varphi_{(\rn{2})}$ has summands determined by the locations of black dots in Figure~\ref{fig:aftertailfunctional}.

\begin{figure}[H]
$$\begin{tikzpicture}[scale=0.45]
    \def\Node{\node [circle, fill, inner sep=1.5pt]}
    
    \draw (0,0)--(0,16)--(16,16)--(16,0)--(0,0);
    \draw[dotted] (0,0)--(16,16);
    \draw[dotted] (0,16)--(16,0);
    \draw[dotted] (0,8)--(16,8);
    \draw[dotted] (8,0)--(8,16);
    
    \draw [line width=.65mm, fill=blue, opacity=0.5] (0,16)--(0,14)--(2,14)--(2,11)--(5,11)--(5,5)--(11,5)--(11,2)--(14,2)--(14,0)--(16,0)--(16,1)--(15,1)--(15,7)--(7,7)--(7,15)--(1,15)--(1,16)--(0,16);
    \draw[line width=.65mm, fill=red, opacity=0.5] (7,7)--(7,9)--(9,9)--(9,7)--(7,7);
    
    \draw (0,15)--(1,15)--(1,14)--(6,14)--(6,6)--(14,6)--(14,1)--(15,1)--(15,0);
    
    \draw (4,13)--(5,13)--(5,12);
    \draw (13,4)--(13,5)--(12,5);
    
    \draw (2,15)--(2,14);
    \draw (3,14)--(3,13)--(2,13);
    \draw (3,13)--(3,12)--(4,12)--(4,13)--(3,13);
    \draw (4,11)--(4,12)--(5,12)--(5,11)--(6,11);
    \draw (5,10)--(7,10);
    \draw (6,9)--(7,9);
    \draw (9,7)--(9,6);
    \draw (10,5)--(10,7);
    \draw (11,4)--(12,4)--(12,5)--(11,5)--(11,6);
    \draw (13,3)--(12,3)--(12,4)--(13,4)--(13,3);
    \draw (14,3)--(13,3)--(13,2);
    \draw (15,2)--(14,2);
    
    \Node at (.5,14.5){};
    \Node at (6.5,14.5){};
    \Node at (2.5,11.5){};
    \Node at (5.5,13.5){};
    \Node at (4.5,12.5){};
    \Node at (5.5,5.5){};
    \Node at (6.5,6.5){};
    \Node at (7.5,8.5){};

\end{tikzpicture}$$
    \caption{The summands of $\varphi_{\rn{(2)}}$ identified within $\g$}
    \label{fig:aftertailfunctional}
\end{figure}

\noindent As Figure~\ref{fig:aftertailfunctional} displays, $\varphi_{(\rn{2})}$ is given by $$\varphi_{(\rn{2})}=e_{2,1}^*+e_{2,7}^*+e_{3,6}^*+e_{4,5}^*+e_{5,3}^*+e_{8,8}^*+e_{10,7}^*+e_{11,6}^*.$$
\end{Ex}

\begin{remark}
The summands of the regular one-form $\varphi_{(\rn{2})}$ can be visualized in the full meander of the associated seaweed $\g.$ Note that $v_n$ is the lone aftertail vertex when $\g$ has homotopy type $\mathcal{H}_C(\bn{1,\dots,1},\rn{2}).$  The regular one-form $\varphi_{(\rn{2})}$ is given by \begin{equation}\label{eq:fn2}
\varphi_{(\rn{2})}=e_{n,n}^*+\sum_{(v_i,v_j)}e_{i,j}^*,
\end{equation}
where the sum is over all edges $(v_i,v_j)$ in the oriented full meander $\overrightarrow{M}_{2n}^C(\g)$ such that either $i\leq n-1$ or $j\leq n-1.$
\end{remark}

\begin{theorem}\label{thm:aftertailkernel}
If $\g\subset\mathfrak{sp}(2n)$ has homotopy type $\mathcal{H}_C(\bn{1,\dots,1},\rn{2})$ with $\varphi_{(\rn{2})}$ constructed as in Section~\ref{sec:Cframework}, then $$\ker(B_{\varphi_{(\rn{2})}})=\text{span}\{h_{(\rn{2})}\},$$ where $$h_{(\rn{2})}=e_{n,n}-e_{n+1,n+1}.$$
\end{theorem}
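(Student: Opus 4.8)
The plan is to follow the template of the proof of Theorem~\ref{thm:h2C}. By Corollary~\ref{cor:ind1htC} we have $\ind\g=1$, and since $\varphi_{(\rn 2)}$ belongs to the family $\Phi_C$ of regular (index-realizing) forms from Dougherty's framework, $\dim\ker B_{\varphi_{(\rn 2)}}=1$; hence it suffices to exhibit one nonzero element of the kernel, namely $h_{(\rn 2)}=e_{n,n}-e_{n+1,n+1}$. That $h_{(\rn 2)}\in\g$ is immediate, since $e_{n,n}-e_{n+1,n+1}=e_{n,n}-e_{2n-n+1,2n-n+1}$ is one of the standard Chevalley basis elements listed in Remark~\ref{rem:Cchev} (first bullet, $i=j=n$), and it is visibly semisimple. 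Equivalently, as in the proof of Theorem~\ref{thm:h2C}, $h_{(\rn 2)}$ is the alternating-sign element attached to the aftertail $2$-cycle $(v_n,v_{n+1},v_n)$ of the full meander $M_{2n}^C(\g)$.

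The structural fact that drives the computation is this: because $\g$ has homotopy type $\mathcal{H}_C(\bn{1,\dots,1},\rn 2)$, the aftertail of $\g$ is the single vertex $v_n$, and in $M_{2n}^C(\g)$ the vertex $v_n$ and its mirror $v_{n+1}$ are joined to each other by exactly one top arc and one bottom arc, and to nothing else (the leftover middle block of each full composition has size $2$). Consequently the only edges of $\overrightarrow{M}_{2n}^C(\g)$ incident to $v_n$ or $v_{n+1}$ are precisely the arcs that are omitted from the sum in \eqref{eq:fn2}, and therefore every summand $e_{i,j}^*$ of $\varphi_{(\rn 2)}$ other than the explicit term $e_{n,n}^*$ satisfies $i,j\notin\{n,n+1\}$. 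This persists after normalizing a location $(i,j)$ with $i+j>2n+1$ to its mirror $(2n-j+1,2n-i+1)$, since that operation produces an index in $\{n,n+1\}$ only if $(i,j)$ already had one. I expect verifying this paragraph to be the main obstacle, as it requires carefully unwinding the edge description \eqref{eq:fn2} of $\varphi_{(\rn 2)}$ in tandem with the precise shape of the aftertail component.

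With that in hand I would finish exactly as in Theorem~\ref{thm:h2C}, checking $\varphi_{(\rn 2)}([h_{(\rn 2)},x])=0$ on each Chevalley basis element $x$ of $\g$. Since $h_{(\rn 2)}$ is diagonal with support $\{n,n+1\}$ and the symplectic involution preserves $\ad(h_{(\rn 2)})$-weights, each $x$ is an $\ad(h_{(\rn 2)})$-eigenvector, $[h_{(\rn 2)},x]=\lambda_x x$ with $\lambda_x\in\{0,\pm1,\pm2\}$, and $\lambda_x\neq 0$ exactly when $x$ is supported on a row or column indexed by $n$ or $n+1$. If $\lambda_x=0$ the bracket vanishes and there is nothing to check. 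If $\lambda_x\neq 0$, then $x$ is not diagonal; by the previous paragraph the only summand location of $\varphi_{(\rn 2)}$ meeting $\{n,n+1\}$ is $(n,n)$, whose basis element is $h_{(\rn 2)}$ itself (with $\lambda=0$), so the location of $x$ is not a summand location, whence $\varphi_{(\rn 2)}(x)=0$ and $\varphi_{(\rn 2)}([h_{(\rn 2)},x])=\lambda_x\,\varphi_{(\rn 2)}(x)=0$. Thus $h_{(\rn 2)}\in\ker B_{\varphi_{(\rn 2)}}$, and together with the dimension count this yields $\ker B_{\varphi_{(\rn 2)}}=\text{span}\{h_{(\rn 2)}\}$, completing the proof.
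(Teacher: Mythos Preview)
Your proof is correct and follows essentially the same approach as the paper's own proof. Both reduce to showing $h_{(\rn 2)}\in\ker(B_{\varphi_{(\rn 2)}})$ via regularity, then verify this on Chevalley basis elements by exploiting the fact that no summand location of $\varphi_{(\rn 2)}$ other than $(n,n)$ involves the indices $n$ or $n+1$; your eigenvector formulation makes this structural point more explicit, but the underlying case analysis is the same as the paper's four bulleted checks.
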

\begin{proof}
Since $\varphi_{(\rn{2})}$ is regular on $\g,$ we need only show that $h_{(\rn{2})}\in\ker(B_{\varphi}).$ Consider the following:
\begin{itemize}
    \item $\varphi_{(\rn{2})}([h_{(\rn{2})},e_{i,i}-e_{2n-i+1,2n-i+1}])=0,$ for all $1\leq i\leq n,$
    \item $\varphi_{(\rn{2})}([h_{(\rn{2})},e_{i,j}\pm e_{2n-j+1,2n-i+1}])=0,$ for all $i,j$ such that $e_{i,j}^*$ is not a summand of $\varphi_{(\rn{2})},$
    \item $\varphi_{(\rn{2})}([h_{(\rn{2})},e_{i,j}- e_{2n-j+1,2n-i+1}])=0,$ for all summands $e_{i,j}^*$ of $\varphi_{(\rn{2})}$ such that $i,j\leq n-1,$ and
    \item $\varphi_{(\rn{2})}(h_{(\rn{2})},e_{i,j}])=0,$ for all summands $e_{i,j}^*$ of $\varphi_{(\rn{2})}$ such that either $i\leq n-1$ and $j>n,$ or $j\leq n-1$ and $i>n.$
\end{itemize}
The result follows.
\end{proof}

\begin{Ex}
Returning to the seaweed $\g=\mathfrak{p}_{16}^C\frac{2|3}{1|6}$ from Example~\ref{ex:aftertailfunctional}, we have that $$\ker(B_{\varphi_{(\rn{2})}})=\text{span}\{e_{8,8}-e_{9,9}\}.$$
\end{Ex}

\subsubsection{$\mathcal{H}_C(1,{\color{blue}1,\dots,1})$}\label{sec:h1}
Let $\g=\mathfrak{p}_{2n}^C\frac{a_1|\dots|a_m}{b_1|\dots|b_t}$ have homotopy type $\mathcal{H}_C(1,\bn{1,\dots,1})$ and denote by $\varphi_{(1)}$ the regular one-form on $\g$ constructed as in Section~\ref{sec:Cframework}. See Example~\ref{ex:outsidefunctional}.

\begin{Ex}\label{ex:outsidefunctional}
Consider the seaweed $\g=\mathfrak{p}_{12}^C\frac{3|2|1}{1|3},$ which has homotopy type $\mathcal{H}_C(1,\bn{1,1}).$ The regular one-form $\varphi_{(1)}$ has summands determined by the locations of black dots in Figure~\ref{fig:outsidefunctional}.

\begin{figure}[H]
$$\begin{tikzpicture}[scale=0.6]
    \def\Node{\node [circle, fill, inner sep=1.5pt]}
    
    \draw (0,0)--(12,0)--(12,12)--(0,12)--(0,0);
    \draw[dotted] (0,0)--(12,12);
    \draw[dotted] (0,12)--(12,0);
    \draw[dotted] (0,6)--(12,6);
    \draw[dotted] (6,0)--(6,12);
    
    \draw (1,12)--(1,10)--(3,10)--(3,9)--(0,9);
    \draw (12,1)--(10,1)--(10,3)--(9,3)--(9,0);
    \draw [line width=.65mm, fill=blue, opacity=0.5] (1,11)--(4,11)--(4,8)--(8,8)--(8,4)--(11,4)--(11,1)--(10,1)--(10,3)--(7,3)--(7,5)--(6,5)--(6,6)--(5,6)--(5,7)--(3,7)--(3,10)--(1,10)--(1,11);
    
    \draw (5,7)--(7,7)--(7,5);
    
    \draw (0,11)--(1,11);
    \draw (2,11)--(2,10);
    \draw (2,10)--(2,9);
    \draw (3,9)--(4,9);
    \draw (3,8)--(4,8);
    \draw (4,8)--(4,7);
    \draw (5,8)--(5,7);
    \draw (6,7)--(6,6);
    \draw (7,6)--(6,6);
    \draw (8,5)--(7,5);
    \draw (8,4)--(7,4);
    \draw (8,3)--(8,4);
    \draw (9,3)--(9,4);
    \draw (10,2)--(9,2);
    \draw (11,2)--(10,2);
    \draw (11,1)--(11,0);
    
    \Node at (0.5,11.5){};
    \Node at (3.5,10.5){};
    \Node at (0.5,9.5){};
    \Node at (2.5,9.5){};
    \Node at (3.5,7.5){};
    \Node at (7.5,7.5){};
    \Node at (6.5,6.5){};
\end{tikzpicture}$$
    \caption{The summands of $\varphi_{(1)}$ identified within $\g$}
    \label{fig:outsidefunctional}
\end{figure}
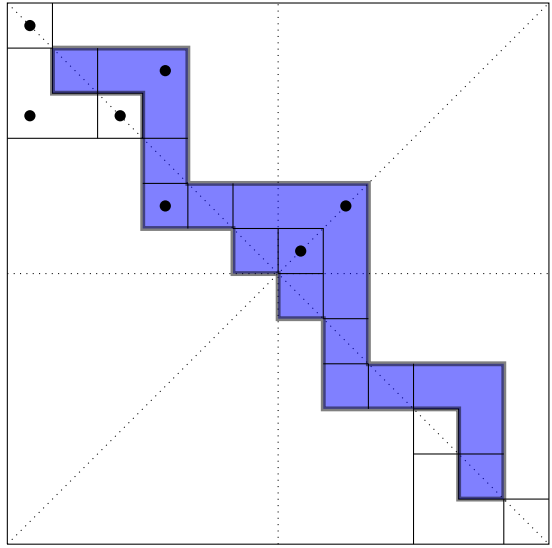
\noindent
As Figure~\ref{fig:outsidefunctional} displays, $\varphi_{(1)}$ is given by $$\varphi_{(1)}=e_{1,1}^*+e_{2,4}^*+e_{3,1}^*+e_{3,3}^*+e_{5,4}^*+e_{5,8}^*+e_{6,7}^*.$$
\end{Ex}

\begin{remark}\label{rem:outsidemeander}
The summands of the regular one-form $\varphi_{(1)}$ can be visualized in the meanders of the associated seaweed $\g.$ Since $\g$ has homotopy type $\mathcal{H}_C(1,\bn{1,\dots,1}),$ there is a unique path $P$ in $M_n^C(\g)$ with no endpoints in $T_n^C(\g).$ If $V(P)$ is the vertex set of $P,$ then $\varphi_{(1)}$ can be written as $$\varphi_{(1)}=\sum_{v_i\in V(P)}e_{i,i}^*+\sum_{(v_i,v_j)}e_{i,j}^*,$$ where the second sum is over all edges $(v_i,v_j)$ in the oriented full meander $\overrightarrow{M}_{2n}^C(\g)$ such that either $i\leq n$ or $j\leq n.$
\end{remark}

Utilizing Remark~\ref{rem:outsidemeander}, we arrive at a combinatorial description of the kernel of $B_{\varphi_{(1)}}$ in Theorem~\ref{thm:outsidekernel} below.

\begin{theorem}\label{thm:outsidekernel}
Let $\mathfrak{g}\subset\mathfrak{sp}(2n)$ have homotopy type $\mathcal{H}_C(1,\bn{1,\dots,1}).$ If $P$ is the unique path in the meander $M_n^C(\mathfrak{g})$ with zero endpoints in the tail and $\varphi_{(1)}$ is constructed as in Section~\ref{sec:Cframework}, then $$\ker(B_{\varphi_{(1)}})=\text{span}\{h_{(1)}\},$$ where $$h_{(1)}=\sum_{v_i\in V(P)}e_{i,i}-e_{2n-i+1,2n-i+1}.$$
\end{theorem}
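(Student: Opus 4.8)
The plan is to follow the template of Theorems~\ref{thm:h2C} and \ref{thm:aftertailkernel}. Since $\varphi_{(1)}$ is regular and $\g$ has index one (Corollary~\ref{cor:ind1htC}), $\ker(B_{\varphi_{(1)}})$ is one-dimensional, so it is enough to exhibit a single nonzero element of it, and the candidate is $h_{(1)}$. First I would check $h_{(1)}\in\g\setminus\{0\}$: by Remark~\ref{rem:Cchev} each $e_{i,i}-e_{2n-i+1,2n-i+1}$ with $v_i\in V(P)$ (so $1\le i\le n$) is a Chevalley basis vector of $\g$, hence $h_{(1)}$ is a linear combination of basis vectors and so lies in $\g$, and it is nonzero because $V(P)\ne\emptyset$.

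Next I would exploit that $h_{(1)}$ is toral. Writing $h_{(1)}=\sum_{k=1}^{2n}\chi(k)\,e_{k,k}$ with $\chi(k)=1$ if $v_k\in V(P)$, $\chi(k)=-1$ if $v_{2n-k+1}\in V(P)$, and $\chi(k)=0$ otherwise --- so that $\chi(2n-k+1)=-\chi(k)$ --- the operator $\ad(h_{(1)})$ is diagonal on the Chevalley basis: it annihilates the toral vectors $e_{i,i}-e_{2n-i+1,2n-i+1}$, and on a root vector $y$ at an admissible location $(i,j)$ (that is, $y=e_{i,j}\pm e_{2n-j+1,2n-i+1}$, or $y=e_{i,j}$ when $i+j=2n+1$) it acts by the scalar $\chi(i)-\chi(j)$, using $\chi(2n-j+1)-\chi(2n-i+1)=\chi(i)-\chi(j)$. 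Hence $\varphi_{(1)}([h_{(1)},y])=(\chi(i)-\chi(j))\,\varphi_{(1)}(y)$, which vanishes unless $y$ is the basis vector attached to a summand location of $\varphi_{(1)}$. By Remark~\ref{rem:outsidemeander} the summand locations are the diagonal entries $(i,i)$ for $v_i\in V(P)$, where $\chi(i)-\chi(i)=0$ trivially, together with the locations $(i,j)$ coming from edges $(v_i,v_j)$ of $\overrightarrow{M}_{2n}^C(\g)$ with $\min(i,j)\le n$. So the whole problem collapses to the combinatorial claim that $\chi(i)=\chi(j)$ for every such edge.

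For that claim I would argue as follows. The full meander $M_{2n}^C(\g)$ is a disjoint union of paths and cycles, so the endpoints of any edge lie in one connected component $K$; meanwhile the support of $\chi$ is $V(P)\sqcup\overline{V(P)}$, where $k\mapsto\overline{k}:=2n-k+1$ is the antidiagonal reflection and, since $V(P)\subseteq\{v_1,\dots,v_n\}$, $\overline{V(P)}\subseteq\{v_{n+1},\dots,v_{2n}\}$. The key structural input, which I expect to be the main obstacle, is that $P$ --- a connected component of $M_n^C(\g)$ with no endpoint in the tail --- survives unchanged as a connected component of $M_{2n}^C(\g)$, linking neither to its mirror $\overline P$ nor to any other component. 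The reason is that an endpoint of $P$ is covered on exactly one side of $M_n^C(\g)$, and, not being a tail vertex, on the uncovered side it lies strictly inside the block structure of the partial composition rather than past it, so it gains no new arc when $M_n^C(\g)$ is extended to the reflected full meander $M_{2n}^C(\g)$; in particular $v_n$ does not occur as an endpoint of $P$. Granting this: if an edge $(v_i,v_j)$ with $\min(i,j)\le n$ meets $V(P)$, then $K=P$ and $v_i,v_j\in V(P)$, so $\chi(i)=\chi(j)=1$; it cannot meet $\overline{V(P)}$ while keeping $\min(i,j)\le n$, because the component $\overline P$ sits entirely in $\{v_{n+1},\dots,v_{2n}\}$; and in the remaining case $\chi(i)=\chi(j)=0$. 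Either way $\chi(i)=\chi(j)$, so $h_{(1)}\in\ker(B_{\varphi_{(1)}})$ and the theorem follows. The delicate part, and where the genuine work sits, is pinning down how $M_n^C(\g)$ embeds into $M_{2n}^C(\g)$ --- i.e.\ the interplay between the partial compositions and the reflection --- so as to confirm that $P$ remains self-contained (equivalently, that $v_n$ is never an endpoint of $P$ under homotopy type $\mathcal{H}_C(1,\bn{1,\dots,1})$).
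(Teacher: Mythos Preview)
Your argument is correct and follows essentially the same line as the paper's: since $\varphi_{(1)}$ is regular it suffices to place $h_{(1)}$ in $\ker(B_{\varphi_{(1)}})$, and you do this via the eigenvalue computation $[h_{(1)},y]=(\chi(i)-\chi(j))y$ together with the case split on summands of $\varphi_{(1)}$, which matches the paper's bulleted verification. The combinatorial point you flag as delicate --- that $P$ (having no tail endpoints, hence no tail vertices, hence sitting entirely inside the block range of both partial compositions) acquires no new incidences in $M_{2n}^C(\g)$ and thus remains a component disjoint from its mirror $\overline{P}$ --- is exactly what underlies the paper's third, fourth, and fifth bullets, and your justification of it is sound.
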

\begin{proof}
Since $\varphi_{(1)}$ is regular on $\g,$ we need only show that $h_{(1)}\in\ker(B_{\varphi_{(1)}}).$ Consider the following:
\begin{itemize}
    \item $\varphi_{(1)}([h_{(1)},e_{i,i}-e_{2n-i+1,2n-i+1}])=0,$ for all $1\leq i\leq n,$
    \item $\varphi_{(1)}([h_{(1)},e_{i,j}\pm e_{2n-j+1,2n-i+1}])=0,$ for all $i,j$ such that $e_{i,j}^*$ is not a summand of $\varphi_{(1)},$
    \item $\varphi_{(1)}([h_{(1)},e_{i,j}- e_{2n-j+1,2n-i+1}])=\varphi_{(1)}(e_{i,j}- e_{2n-j+1,2n-i+1}-(e_{i,j}- e_{2n-j+1,2n-i+1}))=0,$ for all $i,j$ such that $e_{i,j}^*$ is a summand of $\varphi_{(1)}$ and $v_i,v_j\in V(P),$
    \item $\varphi_{(1)}([h_{(1)},e_{i,j}- e_{2n-j+1,2n-i+1}])=0,$ for all $i,j$ such that $e_{i,j}^*$ is a summand of $\varphi_{(1)},$ $i,j\leq n,$ and $v_i,v_j\not\in V(P),$ and
    \item $\varphi_{(1)}([h_{(1)},e_{i,j}])=0,$ for all $i,j$ such that $e_{i,j}^*$ is a summand of $\varphi_{(1)}$ and $i+j=2n+1.$
\end{itemize}
The result follows.
\end{proof}

\begin{Ex}
Returning to the seaweed $\g=\mathfrak{p}_{12}^C\frac{3|2|1}{1|3}$ from Example~\ref{ex:outsidefunctional}, note that $P=\{v_1,v_3\}$ is the unique path in $M_n^C(\mathfrak{g})$ with zero endpoints in the tail \textup(see Figure~\ref{fig:outsidemeander}\textup). Therefore, we have that $$\ker(B_{\varphi_{(1)}})=\text{span}\{e_{1,1}-e_{12,12}+e_{3,3}-e_{10,10}\}.$$

\begin{figure}[H]
$$\begin{tikzpicture}[scale=0.7]
    \def\Node{\node [circle, fill, inner sep=1.5pt]}
    \vertex (1) at (1,0){1};
    \vertex (2) at (2,0){2};
    \vertex (3) at (3,0){3};
    \vertex (4) at (4,0){4};
    \vertex[fill=blue, fill opacity=0.5, text opacity=1] (5) at (5,0){5};
    \vertex[fill=blue, fill opacity=0.5, text opacity=1] (6) at (6,0){6};
    \draw (1) to[bend left=60] (3);
    \draw (4) to[bend left=60] (5);
    
    \draw [line width=0.45mm, blue] (2) to[bend right=60] (4);

\end{tikzpicture}$$
\caption{The meander of $\mathfrak{p}_{14}^C\frac{3|2|1}{1|3}$}
\label{fig:outsidemeander}
\end{figure}
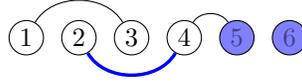
\end{Ex}

\section{Main results}\label{sec:main}
In this section, we establish the main result of this article. In particular, we prove that all type-C seaweeds of index one are contact by showing that each of the one-forms discussed in Sections~\ref{sec:h3}, \ref{sec:h2}, and \ref{sec:h1} are contact. We accomplish this by utilizing the following lemma, which is a classical characterization of contact Lie algebras in terms of an associated regular one-form $\varphi$ and the kernel of the associated $B_{\varphi}.$


\begin{lemma}\label{lem:kernel}
Let $\mathfrak{f}$ be an arbitrary Lie algebra. If $\ind\mathfrak{f}=1,$ then a regular one-form $\varphi\in\mathfrak{f}^*$ with $\ker(B_{\varphi})=\text{span}\{x\}$ is contact if and only if $\varphi(x)\neq 0.$
\end{lemma}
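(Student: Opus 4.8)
The plan is to reduce the assertion to an elementary computation in the exterior algebra $\Lambda^\bullet\mathfrak{f}^*$, via the standard identity linking $d\varphi$ and $B_\varphi$. Recall that for a one-form on a Lie algebra one has $d\varphi(y,z)=-\varphi([y,z])=-B_\varphi(y,z)$, so $d\varphi$ and $B_\varphi$ coincide up to sign as elements of $\Lambda^2\mathfrak{f}^*$; hence $(d\varphi)^k=(-1)^k(B_\varphi)^k$ and $\varphi\wedge(d\varphi)^k\neq 0$ if and only if $\varphi\wedge(B_\varphi)^k\neq 0$. Since $\ind\mathfrak{f}=1$, write $\dim\mathfrak{f}=2k+1$; then regularity of $\varphi$ says exactly that $B_\varphi$ has a one-dimensional kernel $\mathrm{span}\{x\}$, so $B_\varphi$ has rank $2k$.

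Next I would normalize $B_\varphi$. Choosing any complement $W$ of $\mathrm{span}\{x\}$ in $\mathfrak{f}$, the restriction $B_\varphi|_W$ is a nondegenerate alternating form on the $2k$-dimensional space $W$, so the symplectic normal form supplies a basis $x=e_0,e_1,\dots,e_{2k}$ of $\mathfrak{f}$ (with $e_1,\dots,e_{2k}\in W$), with dual basis $e_0^*,\dots,e_{2k}^*$, for which $B_\varphi=\sum_{i=1}^{k}e_{2i-1}^*\wedge e_{2i}^*$. The summands here are commuting $2$-forms whose squares vanish, so expanding the $k$-th power leaves only the $k!$ terms in which each summand appears exactly once, giving $(B_\varphi)^k=k!\,e_1^*\wedge e_2^*\wedge\cdots\wedge e_{2k}^*$.

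Finally, writing $\varphi=\sum_{i=0}^{2k}\varphi(e_i)\,e_i^*$ and wedging, every contribution with index $i\geq 1$ dies from a repeated factor, so only the $i=0$ term survives and
\[
\varphi\wedge(B_\varphi)^k=k!\,\varphi(e_0)\,e_0^*\wedge e_1^*\wedge\cdots\wedge e_{2k}^*=k!\,\varphi(x)\,e_0^*\wedge\cdots\wedge e_{2k}^*.
\]
As $e_0^*\wedge\cdots\wedge e_{2k}^*$ is a nonzero element of $\Lambda^{2k+1}\mathfrak{f}^*$, this is nonzero precisely when $\varphi(x)\neq 0$, which would complete the proof. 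There is no genuine obstacle here: the only points needing a word are the convention $d\varphi=-B_\varphi$ (whose sign is irrelevant to a nonvanishing statement) and the existence of a Darboux-type basis adapted to the one-dimensional kernel of $B_\varphi$, both entirely standard. The single idea worth isolating is that wedging the top exterior power of a corank-one alternating form with a one-form detects exactly the component of that one-form along the kernel line.
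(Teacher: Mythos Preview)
Your proof is correct. Note, however, that the paper does not actually prove this lemma: it is stated as ``a classical characterization of contact Lie algebras'' and left without argument. Your write-up supplies exactly the standard proof of this classical fact---reduce to a Darboux basis adapted to the one-dimensional kernel of $B_\varphi$ and read off $\varphi\wedge(d\varphi)^k$ as $\pm k!\,\varphi(x)$ times the volume element---so there is nothing to compare against in the paper itself.
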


We are now in a position to prove the main theorem of this article.

\begin{theorem}\label{thm:main}
If $\g$ is a type-A or type-C seaweed, then $\g$ is contact if and only if $\ind \g=1.$
\end{theorem}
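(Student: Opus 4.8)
The plan is to reduce Theorem~\ref{thm:main} to the combination of the type-A result from \textbf{\cite{contactA}}, the three kernel computations of Theorems~\ref{thm:h2C}, \ref{thm:aftertailkernel}, and \ref{thm:outsidekernel}, and the classical criterion recorded in Lemma~\ref{lem:kernel}. The forward direction is immediate: contact Lie algebras have index one by definition (a contact form is in particular regular, realizing the index). So the content is the converse: an index-one type-A or type-C seaweed is contact. For type A this is exactly the main theorem of \textbf{\cite{contactA}}, so I would simply invoke it and concentrate on type C.

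First I would recall that, by Corollary~\ref{cor:ind1htC}, an index-one type-C seaweed $\g$ has one of exactly four homotopy types: $\mathcal{H}_C(\bn{1,\dots,1,2,1,\dots,1})$, $\mathcal{H}_C(\bn{1,\dots,1,3,1,\dots,1})$, $\mathcal{H}_C(\bn{1,\dots,1},\rn{2})$, or $\mathcal{H}_C(1,\bn{1,\dots,1})$. In each case Section~\ref{sec:Cframework} produces an explicit regular one-form ($\varphi_{(\bn{2})}$ or $\varphi_{(\bn{3})}$, $\varphi_{(\rn{2})}$, and $\varphi_{(1)}$ respectively), and Theorems~\ref{thm:h2C}, \ref{thm:aftertailkernel}, and \ref{thm:outsidekernel} identify a spanning vector $h$ of $\ker(B_\varphi)$. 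By Lemma~\ref{lem:kernel}, since $\ind\g=1$, it suffices to check in each case that $\varphi(h)\neq 0$; this is a one-line verification per case once the summands of $\varphi$ and the support of $h$ are made explicit.

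Concretely: for the tail-component cases $\bn{c}=\bn{2},\bn{3}$, $h_{(\bn{c})}=\sum_j(-1)^{j+1}e_{i_j,i_j}$ is supported on the vertices of the unique cycle $\mathscr{C}$, while the core block of the size-$\bn{c}$ component contributes the summand $e_{i,i}^*$ of $\varphi_{(\bn{c})}$ for (at least) the top-left diagonal entry $i$ of that block, which lies in $V(P)$ for the cycle's component; evaluating $\varphi_{(\bn{c})}(h_{(\bn{c})})$ picks out the $\pm 1$ coefficient of $e_{i,i}$, so $\varphi_{(\bn{c})}(h_{(\bn{c})})=\pm1\neq0$. For the aftertail case, $h_{(\rn{2})}=e_{n,n}-e_{n+1,n+1}$ and $\varphi_{(\rn{2})}$ has the summand $e_{n,n}^*$ by \eqref{eq:fn2}, so $\varphi_{(\rn{2})}(h_{(\rn{2})})=1\neq0$. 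For the outside-path case, $h_{(1)}=\sum_{v_i\in V(P)}(e_{i,i}-e_{2n-i+1,2n-i+1})$ and, by Remark~\ref{rem:outsidemeander}, $\varphi_{(1)}$ contains $\sum_{v_i\in V(P)}e_{i,i}^*$; since $P$ is nonempty and the indices $i\in V(P)$ with $i\le n$ do not meet their mirrors $2n-i+1$, we get $\varphi_{(1)}(h_{(1)})=|V(P)\cap\{1,\dots,n\}|\ge 1\neq0$. Combining these with Lemma~\ref{lem:kernel} yields that $\g$ is contact in every index-one case, completing the type-C half and hence the theorem.

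The step I expect to carry the real weight is not the final evaluation $\varphi(h)\neq0$ — which is essentially bookkeeping — but making sure the regularity of each $\varphi$ and the kernel descriptions are genuinely in hand; those are supplied by the Dougherty framework of Section~\ref{sec:regular} together with Theorems~\ref{thm:h2C}--\ref{thm:outsidekernel}, so here I would just cite them. The one point to be careful about is that in the mixed homotopy types (e.g. blue $1$'s flanking a $\bn{2}$ or $\bn{3}$, or a leading non-tail $1$ alongside blue $1$'s) the relevant core block whose diagonal $e_{i,i}^*$ contributes to $\varphi$ really does sit on a vertex that is in the support of $h$ with a nonzero coefficient; this is exactly what Lemma~\ref{lem:oppsignC} and the explicit forms of $h$ guarantee, so no new argument is needed.
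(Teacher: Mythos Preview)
Your overall strategy is exactly the paper's: handle type A by citing \textbf{\cite{contactA}}, classify index-one type-C homotopy types via Corollary~\ref{cor:ind1htC}, and in each case feed the Dougherty regular form together with the kernel generator from Theorems~\ref{thm:h2C}, \ref{thm:aftertailkernel}, \ref{thm:outsidekernel} into Lemma~\ref{lem:kernel}. Your evaluations in the aftertail case ($\varphi_{(\rn{2})}(h_{(\rn{2})})=1$) and the outside-path case ($\varphi_{(1)}(h_{(1)})=|V(P)|$, since $V(P)\subset\{1,\dots,n\}$ already) agree with the paper.

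The one gap is your treatment of the $\bn{c}\in\{\bn{2},\bn{3}\}$ case. The size-$\bn{c}$ tail component has not a single core block but many: if the cycle $\mathscr{C}$ has length $k$, then $\varphi_{(\bn{c})}$ contains the top-left diagonal summand $e_{i,i}^*$ from \emph{every} core block with $i\le n$, and there are $k/4$ of these. Your asserted value $\pm 1$ is therefore incorrect in general, and, more importantly, your argument as written does not rule out cancellation among these several diagonal contributions to $\varphi_{(\bn{c})}(h_{(\bn{c})})$. The paper closes this with Lemma~\ref{lem:oppsignC} together with the peak-block computation in the proof of Theorem~\ref{thm:h2C}: the top-left indices of adjacent core blocks carry the \emph{same} sign in $h_{(\bn{c})}$, so all $k/4$ contributions are $+1$ and $\varphi_{(\bn{c})}(h_{(\bn{c})})=k/4\neq 0$. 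Once you insert this step your proof coincides with the paper's.
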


\begin{proof}
If $\g$ is a type-A seaweed, then by Theorem 24 of \textbf{\cite{contactA}}, $\g$ is contact if and only if $\ind\g=1.$ So let $\mathfrak{g}$ be an index-one seaweed of type C. Recall from Corollary~\ref{cor:ind1htC} that this means $\mathfrak{g}$ has one of the following homotopy types:
\begin{enumerate}
    \item[1a.] $\mathcal{H}_C(\bn{1,\dots,1,2,1,\dots,1}),$
    \item[1b.] $\mathcal{H}_C(\bn{1,\dots,1,3,1,\dots,1})$,
    \item[2.] $\mathcal{H}_C(1,{\color{blue}1,\dots,1})$, or
    \item[3.] $\mathcal{H}_C({\color{blue}1,\dots,1},{\color{red}2})$.
\end{enumerate}
We proceed by treating each homotopy type as its own case, using the constructions of Sections~\ref{sec:h3}, \ref{sec:h2}, and \ref{sec:h1}, and then applying Lemma~\ref{lem:kernel}.

\medskip
\noindent

\medskip
\noindent

\medskip
\noindent
\textbf{Case 1:} The seaweed $\g$ has homotopy type $\mathcal{H}_C(\bn{1,\dots,1,c,1,\dots,1}),$ where $\bn{c}=\bn{2}$ or $\bn{3}.$ We claim that $\varphi_{(\bn{c})}$ is a contact form on $\g,$ for $\bn{c}=\bn{2}$ or $\bn{3},$ respectively. Recall from Theorem~\ref{thm:h2C} that if $\mathscr{C}=(v_{i_1},v_{i_2},\dots,v_{i_k},v_{i_1})$ is the unique cycle in the full meander $M_{2n}^C(\g),$ then $$\ker(B_{\varphi_{(\bn{c})}})=\text{span}\{h_{(\bn{c})}\}=\text{span}\left\{\sum_{j=1}^k(-1)^{j+1}e_{i_j,i_j}\right\}.$$ By Lemma~\ref{lem:oppsignC} and the fact that $\varphi_{(\bn{c})}$ only contains summands $e_{i,j}^*$ such that $i+j\leq2n+1,$ we have that $\varphi_{(\bn{c})}(h_{(\bn{c})})=\frac{k}{4}\neq 0,$ for $\bn{c}=\bn{2}$ or $\bn{3}.$ An application of Lemma~\ref{lem:kernel} establishes the claim.

\medskip
\noindent
\textbf{Case 2:} The seaweed $\g$ has homotopy type $\mathcal{H}_C(\bn{1,\dots,1},\rn{2}).$ We claim that $\varphi_{(\rn{2})}$ is a contact form on $\g.$ Recall from Theorem~\ref{thm:aftertailkernel} that $$\ker(B_{\varphi_{(\rn{2})}})=\text{span}\{h_{(\rn{2})}\}=\text{span}\{e_{n,n}-e_{n+1,n+1}\},$$ and notice that $\varphi_{(\rn{2})}(h_{(\rn{2})})=1\neq 0.$ An application of Lemma~\ref{lem:kernel} establishes the claim.

\medskip
\noindent
\textbf{Case 3:} The seaweed $\g$ has homotopy type $\mathcal{H}_C(1,\bn{1,\dots,1}).$ We claim that $\varphi_{(1)}$ is a contact form on $\g.$ Recall from Theorem~\ref{thm:outsidekernel} that if $P$ is the unique path in the meander $M_n^C(\g)$ with zero vertices in the tail, then $$\ker(B_{\varphi_{(1)}})=\text{span}\{h_{(1)}\}=\text{span}\left\{\sum_{v_i\in V(P)}e_{i,i}-e_{2n-i+1,2n-i+1}\right\},$$ and notice that $\varphi_{(1)}(h_{(1)})=|V(P)|\neq 0.$ An application of Lemma~\ref{lem:kernel} establishes the claim.
\end{proof}

\section{Examples}\label{sec:examples}
For type-C seaweeds, Theorem~\ref{thm:main} simplifies the question about the existence of a contact form on a seaweed $\g$ to one about $\g$'s index. But the index may be computed from $\g$'s meander (see Theorem~\ref{thm:indexC}).  This is an elegant reduction, but some burdensome complexity remains, for one must first construct the meander and then count the number and type of connected components.  Fortunately, as with type-A, for type-C seaweeds defined by (partial) compositions consisting of a small number of parts, we can quickly determine the index of $\mathfrak{g}$.
\begin{theorem}[Coll et al. \textbf{\cite{indC}}, 2017]\label{thm:3parts}
Let $\alpha+\beta=n$. If $\gamma=n-1$ or $\gamma=n-2$, then 
$$\ind \mathfrak{p}_{2n}^C \frac{\alpha|\beta}{\gamma}= \gcd(\alpha+\beta,\beta+\gamma)-1.$$
\end{theorem}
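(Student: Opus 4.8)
The plan is to read the index directly off the meander $M_n^C(\g)$ via Theorem~\ref{thm:indexC}, which gives $\ind\g=2C+\tilde P$. Since $\alpha+\beta=n$, the top composition is full, so the top arcs are exactly the reflections of the blocks $[1,\alpha]$ and $[\alpha+1,n]$; the single bottom block $[1,\gamma]$ is reflected, and $v_{\gamma+1},\dots,v_n$ are left over. Consequently $T_n^C(\g)=\{v_n\}$ when $\gamma=n-1$ and $T_n^C(\g)=\{v_{n-1},v_n\}$ when $\gamma=n-2$, the aftertail is empty in both cases, and each tail vertex meets no bottom arc, hence has degree at most $1$ and lies on a path component of $M_n^C(\g)$.

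First I would encode $M_n^C(\g)$ by the permutation $\tau=\sigma_b\circ\sigma_t$ of $\{v_1,\dots,v_n\}$, where $\sigma_t$ (resp.\ $\sigma_b$) is the involution that swaps the two ends of every top (resp.\ bottom) arc and fixes every vertex meeting no top (resp.\ bottom) arc. Checking each component type, $\tau$ has two orbits on each cycle component and one orbit on each path component, so the number of $\tau$-orbits equals $2C+P$, where $P$ is the number of path components. Hence $\ind\g=2C+\tilde P=(\#\tau\text{-orbits})-\epsilon$, where $\epsilon$ counts the path components meeting $T_n^C(\g)$ in exactly one endpoint (the paths with zero or two endpoints in the tail being exactly those tallied by $\tilde P$).

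Next I would compute $\tau$. Identifying $\{v_1,\dots,v_n\}$ with $\mathbb{Z}/n$, composing the two reflections shows that $\tau$ is the rotation $i\mapsto i+(\beta-1)$ when $\gamma=n-1$, and the rotation $i\mapsto i+(\beta-2)$ followed by the transposition $(v_{\alpha+1}\,v_{\alpha+2})$ when $\gamma=n-2$. If $\gamma=n-1$, this rotation has $\gcd(n,\beta-1)=\gcd(\alpha+\beta,\beta+\gamma)$ orbits, and since $v_n$ is the unique tail vertex and it is a path-endpoint, $\epsilon=1$, so $\ind\g=\gcd(\alpha+\beta,\beta+\gamma)-1$. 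If $\gamma=n-2$, put $g=\gcd(n,\beta-2)=\gcd(\alpha+\beta,\beta+\gamma)$; the transposed vertices $v_{\alpha+1},v_{\alpha+2}$ share a rotation-orbit if and only if $g=1$, so by the standard fact that multiplying a permutation by a transposition splits one orbit (points in a common orbit) or merges two (points in distinct orbits), $\tau$ has $g-1$ orbits if $g\ge2$ and $2$ orbits if $g=1$. Because $\tau(v_{\alpha+1})=v_n$ and $\tau(v_{\alpha+2})=v_{n-1}$, the two tail vertices lie on a common path component precisely when $v_{\alpha+1},v_{\alpha+2}$ share a $\tau$-orbit, i.e.\ precisely when $g\ge2$; thus $\epsilon=0$ if $g\ge2$ and $\epsilon=2$ if $g=1$. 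In both subcases $\ind\g=(\#\tau\text{-orbits})-\epsilon=g-1=\gcd(\alpha+\beta,\beta+\gamma)-1$.

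The hard part will be the case $\gamma=n-2$: one must verify that the change in the $\tau$-orbit count produced by the extra transposition is exactly cancelled by the change in $\epsilon$, and one must treat separately the degenerate configuration $\gamma=n-2$ with $\beta=1$, where the block $[\alpha+1,n]$ is a singleton and the transposition $(v_{\alpha+1}\,v_{\alpha+2})$ above is undefined; there $M_n^C(\g)$ is read off directly and the formula follows by inspection. A fallback, should this bookkeeping become unwieldy, is to run the winding-down moves of Lemma~\ref{lem:windingC} on $\tfrac{\alpha|\beta}{\gamma}$ — which execute a signed Euclidean algorithm on $(\alpha,\beta,\gamma)$ — and recover $\ind\g$ from the resulting homotopy type through Theorem~\ref{thm:indhtC}.
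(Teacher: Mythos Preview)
The paper does not prove Theorem~\ref{thm:3parts}; it is quoted from \cite{indC} and used only to manufacture examples in Section~\ref{sec:examples}. So there is no proof in the paper to compare against.

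That said, your argument is correct and self-contained. Encoding $M_n^C(\g)$ by the permutation $\tau=\sigma_b\circ\sigma_t$ and recognising that the number of $\tau$-orbits equals $2C+P$ is exactly the right device for turning Theorem~\ref{thm:indexC} into a closed formula, and the explicit identification of $\tau$ as a rotation (possibly times a transposition) gives the gcd immediately. Two small points worth tightening. First, in the $\gamma=n-2$ case your composition order is off: one has $\tau=R_{\beta-2}\circ(v_{\alpha+1}\,v_{\alpha+2})=(v_{n-1}\,v_n)\circ R_{\beta-2}$, not ``rotation followed by $(v_{\alpha+1}\,v_{\alpha+2})$''; this is harmless for the orbit count since the two products are conjugate, but it matters if one tries to trace individual orbits. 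Second, your $\epsilon$-bookkeeping silently uses that an isolated vertex lying in the tail counts as a path with exactly one endpoint in the tail (hence does not contribute to $\tilde P$). This is the correct reading of Theorem~\ref{thm:indexC}, but it is worth saying explicitly, since configurations such as $\beta=3$, $\gamma=n-2$ (where $v_{n-1}$ is the middle of an odd top block and hence isolated) depend on it. With those clarifications and the degenerate case $\beta=1$ handled as you indicate, the proof is complete.
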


\noindent
Making use of Theorem~\ref{thm:3parts}, we can construct a spate of contact Lie algebras -- we need only ensure that the seaweed $\mathfrak{p}_{2n}^C\frac{\alpha|\beta}{\gamma}$ satisfies $\gamma=n-1$ or $\gamma=n-2$ and $\gcd(\alpha+\beta,\beta+\gamma)=2.$ This is easy to do: For $\gamma=n-2$, consider $\mathfrak{p}_{18}^C\frac{5|3}{7},$ and for $\gamma=n-1$, consider $\mathfrak{p}_{14}^C\frac{4|2}{6},$  or the much larger $\mathfrak{p}_{100}^C\frac{15|13}{49}$.  The reader will have no difficulty in developing others.







\noindent


\begin{thebibliography}{abcd}

\bibitem{defcontact}
M. A. Alvarez, M. C. Rodr\'{i}guez-Vallarte, and G. Salgado. ``Deformation theory of contact Lie algebras as double extensions." \textit{Proc. Am. Math. Soc.}, 149(5): 1827-1836, 2021.

\bibitem{ACam}
A. Cameron, V. E. Coll, Jr., and M. Hyatt. ``Combinatorial index formulas for Lie algebras of seaweed type." \textit{Commun. Algebra}, 48(12): 5430-5454, 2020.

\bibitem{aria}
V. E. Coll, Jr. and A.L. Dougherty. ``Regular functionals on seaweed Lie algebras." arXiv:2004.04784, 2020.

\bibitem{indC}
V. E. Coll, Jr., M. Hyatt, and C. Magnant. ``Symplectic Meanders." \textit{Commun. Algebra}, 45(11): 4717–
4729, 2017.

\bibitem{contactposet}
V. E. Coll, Jr., N. Mayers, and N. Russoniello. ``Contact Lie poset algebras." \textit{Electron. J. Comb.}, 29(3): \#P3.35, 2022.

\bibitem{contactA}
V. E. Coll, Jr., N. Mayers, N. Russoniello, and G. Salgado. ``Contact seaweeds." \textit{Pac. J. Math.}, 320(1): 45-60, 2022.

\bibitem{contactclass}
V. E. Coll, Jr. and N. Russoniello. ``Classification of contact seaweeds." 	arXiv:2303.13260v2, 2023.

\bibitem{DK}
V. Dergachev and A. Kirillov. ``Index of Lie algebras of seaweed type." \textit{J. Lie Theory}, 10(2): 331--343, 2000.

\bibitem{Diatta}
A. Diatta. ``Left invariant contact structures." \textit{Diff. Geom. Appl.}, 26(5): 544-552, 2008.

\bibitem{Adiss}
A. Dougherty. ``Regular functionals on seaweed Lie algebras." Doctoral Dissertation, Lehigh University. ProQuest Dissertations Publishing, 2019.

\bibitem{GR}
M. Goze and E. Remm. ``Contact and Frobeniusian forms on Lie groups." \textit{Diff. Geom. Appl.}, 35: 74-94, 2014.

\bibitem{Joseph} 
A. Joseph. ``On semi-invariants and index for biparabolic (seaweed) algebras, I." \textit{J. Algebra}. 305(1): 487-515, 2006.

\bibitem{contacttoral}
N. Mayers and N. Russoniello. ``On toral posets and contact Lie algebras." \textit{J. Geom. Phys.}, 190: 104861, 2023.

\bibitem{PanRais}
D. Panyushev. ``An extension of Ra\"{i}s' theorem and seaweed subalgebras of simple Lie algebras." \textit{Ann. de l'Institut Fourier}, 55(3): 693-715, 2005.

\bibitem{Pan} D. Panyushev.  ``Inductive formulas for the index of seaweed Lie algebras."  \textit{Mosc. Math. J.}, 1(2): 221-241, 2001.


\end{thebibliography}
\end{document}